\documentclass[11pt]{article}
\usepackage{epic,latexsym,amssymb}
\usepackage{color}
\usepackage{tikz}
\usepackage{amsfonts,epsf,amsmath}
\usepackage{lineno}
\usepackage{soul}

\usepackage{subfig}
\usepackage{epic}
\usepackage{amssymb}
\usepackage{latexsym}
\usepackage{tikz}
\usetikzlibrary{decorations.markings}
\usepackage{pgf}

\usetikzlibrary{arrows}

\usepackage{comment}

\textwidth=6.5in \textheight=8.5in \evensidemargin=0in
\oddsidemargin=0in \topmargin=0in \topskip=0pt \baselineskip=12pt
\parskip=8pt
\parindent=1em

\newtheorem{theorem}{Theorem}

\newtheorem{lemma}{Lemma}

\newtheorem{definition}{Definition}

\newtheorem{corollary}{Corollary}

\newtheorem{conjecture}{Conjecture}

\newcommand{\qed}{$\Box$}
\newcommand{\QED}{$\Box$}

\let\oldenumerate\enumerate
\renewcommand{\enumerate}{
	\oldenumerate
	\setlength{\itemsep}{1pt}
	\setlength{\parskip}{0pt}
	\setlength{\parsep}{0pt}
}

\def\vertex(#1){\put(#1){\circle*{2}}}
\def\vertexo(#1){\put(#1){\circle{2}}}
\def\vert(#1){\put(#1){\circle*{1.5}}}
\def\verto(#1){\put(#1){\circle{1.5}}}
\def\lab(#1)#2{\put(#1){\makebox(0,0)[c]{#2}}}
\setlength{\unitlength}{0.5mm}

\begin{document}

	\title{ $k$-Coalitions in Graphs}

	\author{  Abbas Jafari$^{1}$,	
		Saeid Alikhani$^{1}$,  
		Davood Bakhshesh$^2$ 
	}

	%\date{\today}
	
	\maketitle
	
	\begin{center}
		
		$^1$Department of Mathematical Sciences, Yazd University, 89195-741, Yazd, Iran\\
		
		$^{2}$Department of Computer Science, University of Bojnord, Bojnord, Iran\\ 
		
		\medskip
		{\tt  abbasjafaryb91@gmail.com~~ alikhani@yazd.ac.ir ~~  d.bakhshesh@ub.ac.ir}
	\end{center}

	\date{}
	\maketitle
	%\linenumbers

	\begin{abstract}
		In this paper, we propose and investigate the concept of $k$-coalitions in graphs, where $k\ge 1$ is an integer. A $k$-coalition refers to a pair of disjoint vertex sets that jointly constitute a $k$-dominating set of the graph, meaning that every vertex not in the set has at least $k$ neighbors in the set. We define a $k$-coalition partition of a graph as a vertex partition in which each set is either a $k$-dominating set with  exactly $k$ members or forms a $k$-coalition with another set in the partition. The maximum number of sets in a $k$-coalition partition is called the $k$-coalition number of the graph represented by $C_k(G)$. We present fundamental findings regarding the properties of $k$-coalitions and their connections with other graph parameters. We obtain the exact values of $2$-coalition number of some specific graphs and also study graphs with large $2$-coalition number. 
	\end{abstract}

	\indent
	{\small \textbf{Keywords:}  $k$-Coalition number; $k$-dominating set; coalition partition; tree.} \\
	\indent {\small \textbf{AMS subject classification:} 05C69}

	\section{Introduction}
	
	Consider a graph $G$ with vertex set $V = V(G)$, where we only consider graphs that are simple and undirected. Two vertices are said to be neighbors if they are adjacent. For an integer $k \ge 1$, a $k$-dominating set of $G$ is a set $S$ of vertices such that each vertex in $V \backslash S$ is adjacent to at least $k$ vertices in $S$. The smallest possible size of a $k$-dominating set of $G$ is referred to as the $k$-domination number of $G$, denoted by $\gamma_k(G)$. The interested reader may refer to~\cite{HaHeHe-20,HaHeHe-21} for a comprehensive overview of dominating sets in graphs.
	
	A coalition in a graph $G$ is a pair of sets $S_1$ and $S_2$ that are not dominating sets of $G$, but their union $S_1 \cup S_2$ is a dominating set of $G$. Such a pair forms a coalition and are coalition partners. A vertex partition $\mathfrak{X} = \{S_1, \ldots, S_k\}$ of the vertex set $V(G)$ is called a coalition partition of $G$ if every set $S_i \in \mathfrak{X}$ is either a dominating set of $G$ with cardinality $|S_i| = 1$, or not a dominating set but forms a coalition with some $S_j \in \mathfrak{X}$. The coalition number of a graph is the maximum number of sets in a coalition partition.
	
	The concept of a coalition in graphs was introduced by Haynes, Hedetniemi, Hedetniemi, McRae, and Mohan in \cite{coal0}. Their foundational studies have set the stage for much of the subsequent research on coalition numbers and coalition graphs. Notably, they explored upper bounds on coalition numbers, providing essential insights and bounds that help understand the maximum coalition number possible in various graph classes~\cite{Hay1}. Additionally, they developed the concept of coalition graphs, which are derived from the original graph by focusing on the coalition structure among the vertices, allowing for deeper analysis of the interactions and relationships within graph coalitions~\cite{Hay2}. Extending their previous work, they introduced self-coalition graphs, a specific type of coalition graph where the coalitions possess a self-referential property, adding another layer of complexity and applicability to the study of coalition graphs~\cite{Hay3}.
	
	Recent studies have continued to build upon these foundational concepts, expanding the scope and depth of coalition studies in graphs. Significant contributions in this area include the exploration of total coalitions, independent coalitions, connected coalitions, and specific investigations into coalition numbers in tree structures and singleton coalition graph chains. Alikhani et al. (2024) delve into total coalitions, providing a detailed analysis of how coalitions can encompass all vertices in a graph and the implications of such total structures. This study offers new metrics and bounds for total coalition numbers, expanding the understanding of coalition dynamics in comprehensive vertex sets~\cite{QM}. They also explore the independence properties within coalitions, defining and characterizing independent coalition graphs, leading to new theoretical insights and practical applications in graph theory~\cite{DMGT0}. Moreover, Alikhani et al. investigate connected coalitions, where coalitions form connected subgraphs, providing critical results on the connectivity aspects of coalitions, which are vital for applications requiring robust and resilient coalition structures~\cite{DMGT}.
	
	In addition, Bakhshesh, Henning, and Pradhan (2023) focus on tree structures, a fundamental graph class, to determine coalition numbers. Their findings offer specific insights and bounds applicable to trees, enriching the overall understanding of coalition numbers in hierarchical and acyclic graph structures~\cite{Davood}. In an upcoming publication, Bakhshesh, Henning, and Pradhan explore chains formed by singleton coalitions. This study provides a novel perspective on coalition structures by examining the sequential and chain-like properties of singleton coalitions, contributing to the broader theory of coalition graphs~\cite{Davood2}.
	
	Building on these established concepts, the exploration of $k$-coalitions in graphs represents a promising extension. A {\em $k$-coalition} consists of a pair of disjoint vertex sets that together form a $k$-dominating set of the graph, meaning that each vertex not in the set has at least $k$ neighbors within the set. We define a $k$-coalition partition of a graph as a vertex partition where each set is either a $k$-dominating set with exactly $k$ members or forms a $k$-coalition with another set in the partition. The maximum number of sets in a $k$-coalition partition is referred to as the {\em $k$-coalition number} of the graph, denoted by $C_k(G)$. This generalization has the potential to tackle more complex real-world problems, where entities often participate in multiple overlapping groups.
	\medskip

	In  the next section, after introducing $k$-coalition partition and $k$-coalition number, we present a sufficient condition for existence $k$-coalition partition, and we obtain bounds on the
	$k$-coalition number. Moreover, we study the $k$-coalition number of certain graphs such as complete graphs, trees, paths, cycles, and corona of  paths and cycles with $\overline{K_l}$. In Section 4, we study graphs with large $2$-coalition numbers and characterize trees $T$ of order $n$ with $C_2(T)=n$ and $C_2(T)=n-1$. Finally, we conclude
	the paper in Section 5.

	\section{Existence and some bounds}
	In this section, we present a sufficient condition for existence $k$-coalition partition, and also  we present some bounds on the
	$k$-coalition number.

	\begin{definition}
		Two sets $U_1\subseteq V$ and $U_2\subseteq V$ form  a $k$-coalition (are $k$-coalition partners) if neither is a $k$-dominating set, but their  union is a $k$-dominating set.  We define a $k$-coalition partition $\Theta=\{U_1,\ldots, U_r\}$ of a graph as a vertex partition in which each set of $\Theta$ is either a $k$-dominating set with  exactly $k$ members or forms a $k$-coalition with another set in the partition. We call the $k$-coalition number of a graph the maximum number of sets in a $k$-coalition partition denoted by $C_k(G)$. 
	\end{definition} 
	
	A domatic partition is a partition
	of the vertex set into dominating sets, in other words, a partition $\pi$ = $\{V_1, V_2, . . . , V_k \}$ of $V(G)$  
	such that every set $V_{i}$ is a dominating set in $G$. 
	Cockayne and Hedetniemi \cite{Cockayne} introduced the domatic number of a graph $d(G)$ as the maximum order $k$ of a vertex partition. For more details on the domatic number refer to e.g., \cite{11,12,13}. 
	Now,  we propose the notion of $k$-domatic number of $G$. 
	
	\begin{definition}
		A $k$-domatic partition is a partition
		of the vertex set into $k$-dominating sets, in other words, a partition $\pi$ = $\{V_1, V_2, . . . , V_k \}$ of $V(G)$  
		such that every set $V_{i}$ is a $k$-dominating set in $G$. 
		The $k$-domatic number of a graph $d_k(G)$ is the maximum order $k$ of a vertex partition. 	
	\end{definition}

	\begin{theorem}
		\label{thm1}
		For any integer $k\ge 1$ and  any graph $G$ with $\delta(G)\ge k$  there is a $k$-coalition partition.
	\end{theorem}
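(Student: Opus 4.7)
The plan is to exhibit an explicit $k$-coalition partition built around a minimum-degree vertex. Note first that $\delta(G)\ge k\ge 1$ forces $|V(G)|\ge k+1$, so the one-part partition $\{V(G)\}$ is ruled out: the whole vertex set is $k$-dominating but strictly larger than $k$, so it matches neither clause of the definition. We therefore need at least two parts, and the strategy is to isolate one vertex $v$ together with just enough of its neighbours that what remains fails to $k$-dominate, yet bringing any single separated vertex back into the remainder restores $k$-domination.

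Concretely, fix a vertex $v\in V(G)$ with $\deg(v)=\delta(G)=:\delta$, pick any $S\subseteq N(v)$ with $|S|=\delta-k+1$, and set $W=V(G)\setminus(\{v\}\cup S)$. I would take the partition
\[
\Theta=\{\{v\}\}\cup\{\{u\}:u\in S\}\cup\{W\}
\]
and verify that it is a $k$-coalition partition. The size of $S$ is calibrated so that $v$ has exactly $k-1$ neighbours in $W$, which immediately shows $W$ is not $k$-dominating. The substantive work is then to exhibit a $k$-coalition partner for each part: (i) show that $\{v\}\cup W=V(G)\setminus S$ is $k$-dominating, so that $\{v\}$ and $W$ are partners; and (ii) show, for each $u\in S$, that $\{u\}\cup W$ is $k$-dominating, so that each singleton $\{u\}$ and $W$ are partners. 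Both facts reduce to the same elementary count: any $u\in S$ has $\deg(u)\ge\delta$ neighbours, at most $|S|-1=\delta-k$ of which can lie in $S\setminus\{u\}$, leaving at least $k$ neighbours in the complement of $S$.

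The main obstacle is verifying (ii) uniformly, because inside $\{u\}\cup W$ the count for $v$ (which uses $v$'s $k-1$ neighbours in $W$ together with the extra neighbour $u$) is structurally different from the count for each $u'\in S\setminus\{u\}$ (which uses the $S$-based argument above), and both have to yield exactly $k$; this is possible only because of the tight choice $|S|=\delta-k+1$. A small boundary case is whether $W$ can be empty: the formula $|W|=n-\delta+k-2$ together with $\delta\le n-1$ shows this happens only when $k=1$ and $v$ is adjacent to every other vertex. For $k\ge 2$ this never occurs, and the $k=1$ subcase is the classical coalition-existence result of Haynes, Hedetniemi, Hedetniemi, McRae and Mohan already cited in the paper, to which we may appeal.
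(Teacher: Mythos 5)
Your construction is correct and takes a genuinely different route from the paper's. The paper argues via domatic machinery: it starts from a $k$-domatic partition, refines each class to a minimal $k$-dominating set, splits each such minimal set into two nonempty halves (neither of which can be $k$-dominating, by minimality, while their union is), and then either attaches the leftover vertices as an extra coalition member or absorbs them into one of the halves. You instead exhibit a single explicit partition $\{\{v\}\}\cup\{\{u\}:u\in S\}\cup\{W\}$ anchored at a minimum-degree vertex. The step you rightly flag as the delicate one --- verifying that $\{u\}\cup W$ is $k$-dominating --- does close, but it needs a two-case count rather than the ``$S$-based argument'' alone: for $u'\in S\setminus\{u\}$, if $u'\sim u$ then $u'$ has at most $|S|-1=\delta-k$ neighbours in $S$ and one neighbour at $v$, hence at least $k-1$ neighbours in $W$ and at least $k$ in $\{u\}\cup W$; if $u'\not\sim u$ then $u'$ has at most $|S|-2=\delta-k-1$ neighbours in $S$, hence at least $k$ neighbours in $W$ outright (the extra $-1$ in both counts accounts for $v\in N(u')$, which lies outside $\{u\}\cup W$ and is why the unrefined count of ``$k$ neighbours outside $S$'' does not suffice). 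Your treatment of the boundary cases is legitimate: for $k\ge 2$ the set $W$ is nonempty and no singleton can be $k$-dominating, while the $k=1$ case is covered by the coalition-existence theorem of Haynes et al.\ already cited. As for what each approach buys: the paper's domatic argument is the engine behind its bound $C_k(G)\ge 2d_k(G)$ (Theorem~\ref{gelemm}), whereas your construction is fully explicit and yields the quantitative lower bound $C_k(G)\ge \delta(G)-k+3$ for $k\ge 2$ and $\delta(G)\ge k$, which the paper does not state; it complements the paper's upper bounds of the form $\Delta(G)-k+3$ and, for $k$-regular graphs, recovers the lower bound $C_k(G)\ge 3$ of the paper's corollary.
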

	\begin{proof}
		Consider a graph $G$ with a $k$-domatic partition $\Phi = \{X_1, \ldots, X_s\}$. Let $1\leq i<s$. Without loss of generality, assume that $X_i$ is a minimal $k$-dominating set of $G$. If it is not, then there exists a minimal $k$-dominating set $X_i'\subseteq X_i$. In this case, we replace $X_i$ with $X_i'$ and add all members of $X_i\backslash X_i'$ to $X_s$. To construct a $k$-coalition partition $\Theta$ of $G$, we split each minimal $k$-dominating set $X_i$ with $i<s$ into two non-empty sets $X_{i,1}$ and $X_{i,2}$ and add them to $\Theta$. If $k=1$ and $|X_i|=1$, we simply add $X_i$ to $\Theta$ without splitting it. Note that neither $X_{i,1}$ nor $X_{i,2}$ is a $k$-dominating set, but their union is a $k$-dominating set. Next, we consider the set $X_s$. If $X_s$ is a minimal $k$-dominating set, we split it into two non-empty sets $X_{s,1}$ and $X_{s,2}$ and add them to $\Theta$ to complete the construction. If $X_s$ is not a minimal $k$-dominating set, there exists a set $X_s'\subseteq X_s$ that is minimal and $k$-dominating. We split $X_s'$ into two non-empty sets $X_{s,1}'$ and $X_{s,2}'$ and add them to $\Theta$. Let $X_s''=X_s\backslash X_s'$. It is important to observe that $X_s''$ cannot be a $k$-dominating set, as this would imply that $d_k(G)>s$, which contradicts the fact that $\Phi$ is a $k$-domatic partition of $G$. If $X_s''$ forms a $k$-coalition with any set in $\Theta$, we add it to $\Theta$ and finish the construction. Otherwise, we remove $X_{s,2}'$ from $\Theta$ and add $X_{s,2}'\cup X_s''$ to $\Theta$. \QED
	\end{proof}

	The following statement gives a lower bound on $C_k(G)$ for connected graphs of order $n$  by means of the $k$-domatic number. 
	
	\begin{theorem} \label{gelemm}
		If $G$ is a connected graph, then $C_k(G)\ge 2d_k(G)$.
	\end{theorem}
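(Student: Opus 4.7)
The plan is to turn a maximum $k$-domatic partition into a $k$-coalition partition by splitting each of its parts in two, mirroring the construction from the proof of Theorem~\ref{thm1}. Let $s = d_k(G)$ and fix a $k$-domatic partition $\Phi = \{X_1, \ldots, X_s\}$. The key observation is that if $X_i$ is a \emph{minimal} $k$-dominating set and we split it as $X_i = X_{i,1} \sqcup X_{i,2}$ with both parts nonempty, then neither $X_{i,1}$ nor $X_{i,2}$ is $k$-dominating (by minimality), while their union is; hence $\{X_{i,1}, X_{i,2}\}$ forms a $k$-coalition pair and contributes two parts to our target partition~$\Theta$.

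To arrange for every part to be minimal, for each $i < s$ I would replace $X_i$ by a minimal $k$-dominating subset $X_i' \subseteq X_i$ and absorb the leftover vertices $X_i \setminus X_i'$ into $X_s$. Since adding vertices to a $k$-dominating set preserves $k$-dominance, the resulting collection is still a $k$-domatic partition of size $s$. Splitting each of the minimal sets $X_1', \ldots, X_{s-1}'$ into two nonempty pieces as above contributes $2(s-1)$ parts to $\Theta$.

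It remains to extract two more parts from the (possibly non-minimal) set $X_s$. If $X_s$ happens to be minimal, split it and we are done. Otherwise, pick a minimal $k$-dominating subset $X_s' \subseteq X_s$, split it into $X_{s,1}' \sqcup X_{s,2}'$, and note that the leftover $X_s'' = X_s \setminus X_s'$ cannot be $k$-dominating, for otherwise $\{X_1', \ldots, X_{s-1}', X_s', X_s''\}$ would be a $k$-domatic partition of size $s+1$, contradicting $s = d_k(G)$. If $X_s''$ forms a $k$-coalition with some already-placed part of $\Theta$, I add it as an extra part and obtain $2s+1$ sets; otherwise, I replace $X_{s,2}'$ by $X_{s,2}' \cup X_s''$ and pair it with $X_{s,1}'$, producing exactly $2s$ parts. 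The main obstacle is the verification in this last case that $X_{s,2}' \cup X_s''$ is still not $k$-dominating, so that it legitimately serves as a coalition partner of $X_{s,1}'$ rather than being a disallowed $k$-dominating part of size larger than $k$; this is the delicate step of the argument, and is where the connectedness hypothesis on $G$ enters, as it rules out the degenerate splittings that would otherwise make the merged set $k$-dominating.
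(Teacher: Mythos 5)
Your construction is essentially the paper's own: minimalize the first $s-1$ parts of a maximum $k$-domatic partition by absorbing leftovers into $X_s$, split each minimal part into two non-$k$-dominating pieces, and then run the same case analysis on $X_s$. The one place you diverge is the closing remark, and there you misdiagnose the situation: the verification that $X_{s,2}'\cup X_s''$ is not $k$-dominating is neither delicate nor the point where connectedness enters. You are in the case where $X_s''$ forms a $k$-coalition with no set already placed in $\Theta$; in particular it forms none with $X_{s,2}'$. Since both $X_s''$ and $X_{s,2}'$ are non-$k$-dominating, the only way they can fail to form a coalition is that $X_{s,2}'\cup X_s''$ is itself not $k$-dominating --- which is exactly the property you need. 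Its union with $X_{s,1}'$ contains the $k$-dominating set $X_s'$, so the pair $\{X_{s,1}',\,X_{s,2}'\cup X_s''\}$ is a legitimate coalition and the count of $2s$ parts stands. Connectedness plays no role in this step (the paper's proof does not invoke it there either), so you should delete that appeal rather than lean on it; with that one observation supplied, your argument closes and coincides with the paper's.
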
 
	
	\begin{proof} 
		Let $G$ have a $k$-domatic partition $\mathcal{C} = \{C_1, C_2, \ldots, C_s\}$ with $d_k(G) = s$. Without loss of generality, we assume that the sets $\{C_1, C_2, \ldots, C_{s-1}\}$ are minimal $k$-dominating sets. If any set $C_i$ is not minimal, we can find a subset $C'_i \subseteq C_i$ that is a minimal $k$-dominating set and add the remaining vertices to the set $C_s$. Note that if we partition a minimal $k$-dominating set with more than one element into two non-empty sets, we obtain two non-$k$-dominating sets that together form a $k$-coalition. Consequently, we divide each non-singleton set $C_i$ into two sets $C_{i,1}$ and $C_{i,2}$ that form a $k$-coalition. This results in a new partition $\mathcal{C}'$ consisting of non-$k$-dominating sets, each of which pairs with another non-$k$-dominating set in $\mathcal{C}'$ to form a coalition.
		Next, we consider the $k$-dominating set $C_s$. If $C_s$ is a minimal $k$-dominating set, we divide it into two non-$k$-dominating sets, add these sets to $\mathcal{C}'$, and obtain a $k$-coalition partition of cardinality at least $2s$. Since $s = d_k(G)$, it follows that $C_k(G) \geq 2d_k(G)$.
		If $C_s$ is not a minimal $k$-dominating set, we aim to find a subset $C'_s \subseteq C_s$ that is minimal. We then partition $C'_s$ into two non-$k$-dominating sets that together form a $k$-coalition. Let $C''_s$ be the complement of $C'_s$ in $C_s$, and append $C'_{s,1}$ and $C'_{s,2}$ to $\mathcal{C}'$. If $C''_s$ can merge with any non-$k$-dominating set to form a $k$-coalition, we can obtain a $k$-coalition partition of cardinality at least $2s+1$ by adding $C''_s$ to $\mathcal{C}'$. Thus, $C_k(G) \geq 2d_k(G) + 1$. 
		However, if $C''_s$ cannot form a $k$-coalition with any set in $\mathcal{C}'$, we remove $C'_{s,2}$ from $\mathcal{C}'$ and add the set $C'_{s,2} \cup C''_s$ to $\mathcal{C}'$. This results in a $k$-coalition partition of cardinality at least $2s$. Therefore, $C_k(G) \geq 2d_k(G)$.
		
		Based on the above arguments, we conclude that $C_k(G) \geq 2d_k(G)$, completing the proof.
		\qed
	\end{proof}

	Using similar arguments as in the proof of Theorem \ref{gelemm}, we obtain the following corollary:
	
	\begin{corollary}
		For even $k$, $C_k(G)\geq d_{k/2}(G)$. 
	\end{corollary}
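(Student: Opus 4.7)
The plan is to adapt the construction in the proof of Theorem~\ref{gelemm}, starting from a $k/2$-domatic partition instead of a $k$-domatic partition. Let $\pi = \{V_1, \ldots, V_s\}$ be a $k/2$-domatic partition of $G$ with $s = d_{k/2}(G)$; the goal is to produce a $k$-coalition partition containing at least $s$ sets, which then yields $C_k(G) \geq d_{k/2}(G)$.

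The key new ingredient, which is where the evenness of $k$ enters, is the observation that for any two distinct indices $i \neq j$, the union $V_i \cup V_j$ is a $k$-dominating set of $G$. Indeed, any vertex $v \notin V_i \cup V_j$ lies outside both parts, and since each of $V_i$ and $V_j$ is $k/2$-dominating, $v$ has at least $k/2$ neighbors in each; as $V_i$ and $V_j$ are disjoint, $v$ has at least $k/2 + k/2 = k$ neighbors in $V_i \cup V_j$.

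Given this, if no $V_i$ is itself a $k$-dominating set then $\pi$ is already a $k$-coalition partition: each $V_i$ pairs with any other $V_j$ (with $j \neq i$) to form a $k$-coalition. Otherwise, for every $V_i$ that happens to be $k$-dominating I would run exactly the refinement used in the proof of Theorem~\ref{gelemm}: replace $V_i$ by a minimal $k$-dominating subset $V_i' \subseteq V_i$, push the surplus $V_i \setminus V_i'$ into some other part $V_j$ (which preserves the $k/2$-dominating property of $V_j$, so all remaining pairwise unions are still $k$-dominating by the observation above), and then either retain $V_i'$ as a $k$-dominating set of exactly $k$ members if $|V_i'| = k$, or split $V_i'$ into two nonempty pieces which by minimality of $V_i'$ form a $k$-coalition with each other. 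Each such operation preserves or increases the total number of parts, so the final partition has at least $s$ parts.

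The principal difficulty lies in the careful bookkeeping of the second case: at every step one must verify that the number of parts does not drop below $s$ and that no part is left orphaned --- that is, every part is either a $k$-dominating set with exactly $k$ vertices or has a $k$-coalition partner in the current partition. The small boundary cases $s = 1$ (where $V_1 = V(G)$) and $s = 2$ (where there are few parts to redistribute among) may require a separate direct argument, for instance by appealing to Theorem~\ref{thm1} or by inspecting the structure of $G$ directly.
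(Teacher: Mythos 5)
Your proposal is correct and matches the paper's approach: the paper gives no explicit proof, only the remark that the corollary follows by arguments similar to those of Theorem~\ref{gelemm}, and your key observation --- that the union of two disjoint $k/2$-dominating sets is a $k$-dominating set, so the parts of a $k/2$-domatic partition serve directly as pairwise $k$-coalition partners --- is exactly the ingredient that remark presupposes. The bookkeeping you defer for parts that happen to be $k$-dominating (and the $s=1$ boundary case) is handled no more carefully in the paper's own proofs of Theorems~\ref{thm1} and~\ref{gelemm}, so your sketch is at the same level of rigor as the source.
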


	\begin{lemma}
		\label{l2delta}
		For any graph $G$ and any $c_k$-partition $\cal C$ of $G$, any set of  ${\cal C}$ forms a $k$-coalition with at most $\Delta(G)-k+2$  sets of ${\cal C}$.
	\end{lemma}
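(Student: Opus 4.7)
Fix a set $A\in\mathcal{C}$ and count the sets of $\mathcal{C}$ with which $A$ forms a $k$-coalition. If $A$ is itself a $k$-dominating set with exactly $k$ members (the first alternative in the definition of a $c_k$-partition), then by the definition of a $k$-coalition — neither partner may be $k$-dominating — the set $A$ has no partners at all and the bound holds vacuously. Otherwise $A$ is not $k$-dominating, so there exists a witness vertex $v\notin A$ with $j:=|N(v)\cap A|<k$, and the plan is to use $v$ to obstruct all but $\Delta(G)-k+2$ potential partners of $A$.

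For every partner $B\in\mathcal{C}$ of $A$, the union $A\cup B$ must $k$-dominate $v$, hence either $v\in B$ or $|N(v)\cap B|\ge k-j$. Because the sets of $\mathcal{C}$ are pairwise disjoint, at most one partner can contain $v$. The remaining partners are pairwise disjoint subsets of $V\setminus(A\cup\{v\})$, each contributing at least $k-j$ vertices drawn from the pool $N(v)\setminus A$, which has size at most $\Delta(G)-j$. A pigeonhole count therefore bounds the number of such partners by $\lfloor(\Delta(G)-j)/(k-j)\rfloor$, so altogether $A$ has at most $1+\lfloor(\Delta(G)-j)/(k-j)\rfloor$ partners.

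It remains to maximize this quantity over the admissible range $j\in\{0,1,\ldots,k-1\}$. Since $k-j\ge 1$ throughout and (in the relevant regime $\Delta(G)\ge k$) the ratio $(\Delta(G)-j)/(k-j)$ is non-decreasing in $j$, the maximum is achieved at $j=k-1$, where it equals $\Delta(G)-k+1$. Adding the extra $+1$ for the partner that may contain $v$ yields the claimed bound $\Delta(G)-k+2$. The main care needed is in the case-split induced by $v$: one must not double-count the single partner that may contain $v$, and one must notice that the remaining partners compete only for the restricted pool $N(v)\setminus A$ rather than all of $V\setminus A$ — this is the feature that produces the denominator $k-j$ in the pigeonhole step and ultimately yields the tight ``$+2$''.
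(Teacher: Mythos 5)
Your argument is correct and follows essentially the same route as the paper's proof: choose a witness vertex not $k$-dominated by the set, note that at most one partner can contain that vertex, and bound the remaining partners by how many pairwise disjoint sets can each supply the missing neighbors. Your explicit pigeonhole count with denominator $k-j$ and the maximization over $j$ merely make rigorous the step that the paper handles by asserting directly that the worst case is $j=k-1$.
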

	\begin{proof}
		Consider a vertex $v$ in graph $G$ and a set $S \in \mathcal{C}$ such that $v \in S$. If $S$ is a $k$-dominating set, then by definition, $S$ forms a $k$-coalition with no other set in $\mathcal{C}$, thereby confirming the result. Now, suppose $S$ is not a $k$-dominating set. Then, there exists a vertex $x \notin S$ that is not $k$-dominated by $S$. For any set $A \in \mathcal{C}$ that does not include $x$ and forms a $k$-coalition with $S$, in order for $A \cup S$ to $k$-dominate vertex $x$, the set $A \cup S$ must include at least $k$ vertices from $N(x)$ (the neighborhood of $x$). Let $x \in A$. To maximize the number of sets that form a $k$-coalition with $S$, the set $S$ must contain at most $k-1$ neighbors of $x$, leaving the remaining neighbors of $x$ to be covered by all coalition partners of $S$ except $A$. Therefore, in the worst case, $S$ forms a $k$-coalition with at most $1 + |N(x)| - (k-1) \leq \Delta(G) - k + 2$ sets. This completes the proof.
		\QED
	\end{proof}
	\begin{theorem}
		\label{lthm}
		For any graph $G$ with the  maximum degree $\Delta(G)$ and $k>\delta(G)$, 
		$ 	C_{k}(G)\le \Delta(G)-k+3$.
	\end{theorem}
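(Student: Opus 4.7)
The plan is to leverage the hypothesis $k > \delta(G)$ by identifying a minimum-degree vertex that must lie in every $k$-dominating set of $G$, which forces almost the entire coalition structure of any $k$-coalition partition to flow through a single set. Lemma \ref{l2delta} then supplies the needed bound.

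First I would pick a vertex $v$ with $\deg_G(v) = \delta(G) < k$. Since $v$ has strictly fewer than $k$ neighbors in $G$, no subset $D \subseteq V(G) \setminus \{v\}$ can contain $k$ neighbors of $v$; equivalently, every $k$-dominating set of $G$ must contain $v$ itself. Now fix any $k$-coalition partition $\mathcal{C}$ of $G$ and let $S \in \mathcal{C}$ be the unique set containing $v$. For every other member $A \in \mathcal{C}$ we have $v \notin A$, so $A$ is not a $k$-dominating set and, by the definition of a $k$-coalition partition, must form a $k$-coalition with some $B \in \mathcal{C}$. Since $v \in A \cup B$ while $v \notin A$, we must have $v \in B$, forcing $B = S$. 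Consequently $S$ is a $k$-coalition partner of every one of the $|\mathcal{C}| - 1$ remaining sets. Applying Lemma \ref{l2delta} to $S$ yields $|\mathcal{C}| - 1 \le \Delta(G) - k + 2$, so $|\mathcal{C}| \le \Delta(G) - k + 3$, and taking the maximum over all $k$-coalition partitions gives the claimed bound.

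The main obstacle I anticipate is the borderline configuration in which $S$ itself happens to be a $k$-dominating set with exactly $k$ members; then $S$ is permitted in $\mathcal{C}$ without needing a coalition partner, so the reasoning above has to be supplemented with the observation that $S$ still cannot serve as anyone else's partner, since two $k$-coalition partners must each be non-$k$-dominating by definition. This forces $\mathcal{C} = \{S\}$, a trivial case. Apart from this minor case analysis, the argument is a clean pigeonhole: the hypothesis $k > \delta(G)$ pins $v$ into every $k$-dominating set, and Lemma \ref{l2delta} converts this into the advertised inequality.
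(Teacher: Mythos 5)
Your proof is correct and takes essentially the same route as the paper's: both arguments pin the minimum-degree vertex (having fewer than $k$ neighbors) into every $k$-dominating set, deduce that every $k$-coalition in the partition must involve the unique set containing that vertex, and then close with Lemma~\ref{l2delta}. Your write-up is in fact slightly cleaner, since it replaces the paper's case split on whether $N(x)\subseteq X$ with a single unified argument and explicitly disposes of the degenerate case where that set is itself a $k$-dominating set.
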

	
	\begin{proof}
		Let $x$ be a vertex of $G$ of degree $\deg(x)=\delta(G)$. Let $\cal C$ be a $c_{k}$-partition of $G$  of the  cardinality  $C_{k}(G)$. Let $X\in {\cal C}$ such that $x\in X$. If  $N(x)\subseteq X$, then any set of ${\cal C}\setminus X$ must  form a $k$-coalition only with $X$. Hence, by Lemma~\ref{l2delta}, $C_{k}(G)\leq 1+\Delta(G)-k+2= \Delta(G)-k+3$.  Now, assume that  $N(x)\nsubseteq X$. Let $A\neq X$ and $B\neq X$ be two sets of ${\cal C}$. If $A$ and $B$ forms a $k$-coalition, then $A\cup B$ is a $k$-dominating set. Since $x\not\in A\cup B$, $x$ must have at least  $k$ neighbors in $A\cup B$, which is a contradiction because $x$ has $\delta(G)<k$ neighbors. Hence, every set of ${\cal C}$ must only form a $k$-coalition with $X$. Hence, by    Lemma~\ref{l2delta}, we have $C_k(G)\leq \Delta(G)-k+2+1=\Delta(G)-k+3$. \QED
	\end{proof}

	\begin{theorem}
		\label{lthm}
		For any graph $G$ with $\Delta(G)\geq \delta(G)+1$, $C_{\delta(G)}(G)\le 2\Delta(G)-2\delta(G)+4$.
	\end{theorem}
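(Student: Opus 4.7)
The plan is to extend the single-hub argument used in the previous theorem to a \emph{two-hub} argument, exploiting the fact that when $k=\delta(G)$ the minimum-degree vertex $x$ has exactly $k$ neighbours, so any $k$-coalition avoiding $x$ must contain \emph{all} of $N(x)$. Set $k=\delta(G)$, fix $x\in V(G)$ with $\deg(x)=k$, let $\mathcal{C}$ be a $k$-coalition partition of $G$ of maximum cardinality $C_k(G)$, and let $X\in\mathcal{C}$ be the set containing $x$.

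If $N(x)\subseteq X$, then for any $A,B\in\mathcal{C}\setminus\{X\}$ the union $A\cup B$ is disjoint from $\{x\}\cup N(x)$ and fails to $k$-dominate $x$, so every $k$-coalition in $\mathcal{C}$ must involve $X$; a $k$-dominating set of size $k$ lying in $\mathcal{C}\setminus\{X\}$ would have to avoid $x$ and equal $N(x)$, which is ruled out by disjointness with $X$. Lemma~\ref{l2delta} then gives $|\mathcal{C}|\le 1+(\Delta-k+2)=\Delta-\delta+3\le 2\Delta-2\delta+4$, using $\Delta\ge\delta+1$.

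In the remaining case, pick $v\in N(x)\setminus X$ and let $Y\in\mathcal{C}$ be the set containing $v$; then $Y\ne X$. The pivotal observation is that no two sets $A,B\in\mathcal{C}\setminus\{X,Y\}$ can form a $k$-coalition, for otherwise $A\cup B$ would be $k$-dominating with $x\notin A\cup B$, forcing $|N(x)\cap(A\cup B)|\ge k=|N(x)|$, hence $N(x)\subseteq A\cup B$, and in particular $v\in A\cup B$, contradicting $v\in Y$ and the disjointness of partition classes. Thus every set in $\mathcal{C}\setminus\{X,Y\}$ is either a coalition partner of $X$, a coalition partner of $Y$, or a $k$-dominating set of size $k$; in the last case the set must equal $N(x)$, but if $N(x)\in\mathcal{C}$ then the disjointness with $Y$ (which contains $v\in N(x)$) forces $Y=N(x)$, so no such set survives in $\mathcal{C}\setminus\{X,Y\}$.

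By Lemma~\ref{l2delta} each of $X$ and $Y$ has at most $\Delta-k+2$ coalition partners, so whenever $X$ and $Y$ are themselves coalition partners they lie in each other's partner sets and the whole partition is absorbed into their union, giving $|\mathcal{C}|\le 2(\Delta-k+2)=2\Delta-2\delta+4$. I would arrange the coalition $X$ with $Y$ by choosing $v$ inside a coalition partner of $X$ that meets $N(x)$; and if $X$ has no coalition partner at all (i.e.\ $X$ is a $k$-dominating set of size $k$) the forced structure $V(G)=X\cup N(x)$ with $|V(G)|=2k$ makes a direct count yield $|\mathcal{C}|\le 3$. The main obstacle is the residual subcase in which $X$ has at least one coalition partner but none of them meets $N(x)$; here one sharpens the bound on the partners of $X$ by observing that the ``$+1$'' in the proof of Lemma~\ref{l2delta} (for the partner containing the chosen witness) vanishes when no partner of $X$ contains any $v\in N(x)$, and similarly sharpens the bound on the partners of $Y$ by taking $x$ itself as witness whenever $N(x)\not\subseteq Y$; a short case analysis on whether $Y\supseteq N(x)$ then closes the gap.
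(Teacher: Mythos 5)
Your overall architecture matches the paper's (fix a vertex $x$ of degree $\delta(G)=k$, use Lemma~\ref{l2delta}, and exploit that any $k$-coalition avoiding $x$ must contain all of $N(x)$), and your unifying observation that every coalition must involve $X$ or the single set $Y$ meeting $N(x)\setminus X$ is a clean compression of the paper's three subcases. The first case ($N(x)\subseteq X$), the case where $Y$ can be chosen as a coalition partner of $X$, the case where $X$ is itself a $k$-dominating set of size $k$, and the subcase $N(x)\not\subseteq Y$ (where $x$ serves as a witness for $Y$, forcing $Y$ to have at most one partner and giving $|\mathcal{C}|\le \Delta-k+5\le 2\Delta-2k+4$) are all correctly handled or correctly sketched.

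However, there is a genuine gap in the residual subcase you flag as ``the main obstacle,'' specifically its branch $N(x)\subseteq Y$ with $X$ and $Y$ not coalition partners. The tools you propose do not close it. First, sharpening the partner count of $X$ by letting the ``$+1$'' vanish requires the witness for $X$ to lie in $N(x)$, i.e.\ some vertex of $N(x)\setminus X$ must fail to be $k$-dominated by $X$; nothing guarantees this, since $X$ may fail to $k$-dominate only vertices far from $x$. Second, $x$ cannot serve as a witness for $Y$ in this branch precisely because $N(x)\subseteq Y$ means $x$ \emph{is} $k$-dominated by $Y$ (if $Y$ is not $k$-dominating, its witness lies elsewhere). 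Third, even if the sharpening for $X$ succeeded, the count would be $2+(\Delta-k+1)+(\Delta-k+2)=2\Delta-2k+5$, still one too many. The missing idea --- which is how the paper closes this case --- is to use the failure of $X\cup Y$ to be a $k$-dominating set: take a vertex $w\notin X\cup Y$ with $|N(w)\cap(X\cup Y)|\le k-1$; every set of $\mathcal{C}\setminus\{X,Y\}$ is a partner of $X$ or of $Y$, hence must contain $w$ or sufficiently many neighbours of $w$ outside $X\cup Y$, and a disjointness count over $N[w]\setminus(X\cup Y)$ yields at most $\Delta-k+2$ such sets, so $|\mathcal{C}|\le \Delta-k+4\le 2\Delta-2k+4$. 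Without an argument of this kind your proof does not establish the claimed bound in that branch.
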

	
	\begin{proof}
		Let $k=\delta(G)$ and let $x$ be a vertex of $G$ of degree $\delta(G)$. Let $\mathcal{C}$ be a 	$c_k$-partition of $G$ of the cardinality $C_k(G)$. Let $X\in \mathcal{C}$ such that $x\in X$. 
		\begin{itemize}
			\item 
			If $N(x)\cap X\neq \emptyset$, then, any set of $\mathcal{C}\setminus X$
			must form a $k$-coalition only with $X$. Hence, by Lemma \ref{l2delta},
			$C_k(G)\leq \Delta(G)-k+3$. Since $\Delta(G)\geq \delta(G)=k$, we have $C_{\delta(G)}(G)\le 2\Delta(G)-2\delta(G)+4$. 
			
			\item If $N(x)\cap X= \emptyset$, then we consider the following cases. 
			
			\begin{itemize}
				
				\item  There exists at least three sets $A,B$ and $C$ in $c_k$-partition $\mathcal{C}\setminus \{X\}$ which have intersect with  $N(x)$, i.e.,  $A\cap N(x)\neq \emptyset$, $B\cap N(x)\neq \emptyset$ and $C\cap N(x)\neq \emptyset$.  In this case for every two partners $S_i$ and $S_j$ which forms $k$-coalition, we have $S_i=X$, or $S_j=X$. Therefore by Lemma \ref{l2delta}, $C_k(G)\leq \Delta(G)-k+3$.

				\item There exists exactly two  sets $A$ and $B$ with $A\cap N(x)\neq \emptyset$, $B\cap N(x)\neq \emptyset$. If $A$ and $B$ form a $k$-coalition, then $N(x)\subseteq A\cup B$.  Hence,  there is no sets $C\setminus\{X\}$ forming $k$-coalition with $A$ or $B$. Hence, by Lemma \ref{l2delta} the set $X$ is in at most $\Delta(G)-k+2$ $k$-coalition and therefore $C_k(G)\leq 1+\Delta(G)-k+2+2=\Delta(G)-k+5$. Since $\Delta(G)\ge k+1$, we have $C_{k}(G)\le 2(\Delta(G)-k+2)$.  If $A$ and $B$ do not form a $k$-coalition, then each of $A$ and $B$ form a $k$-coalitions with $X$, then  by Lemma \ref{l2delta}, $C_k(G)\leq1+\Delta(G)-k+2=\Delta(G)-k+3$. Since $\Delta(G)\ge k+1$, we have $C_{k}(G)\le 2(\Delta(G)-k+2)$.
				
				\item There exist exactly one set $A\in {\cal C}$ with  $A\cap N(x)\neq \emptyset$.
				If $N(x)\nsubseteq A$, then for $k$-dominating of vertex $x$, $A$ does not form a $k$-coalition with any set ${\cal C}\setminus \{X\}$. Hence $A$ and $X$ form a $k$-coalition. Hence, by Lemma \ref{l2delta}, $C_{k}(G)\leq \Delta(G)-k+3$. Since $\Delta(G)\ge k+1$, we have $C_{k}(G)\le 2(\Delta(G)-k+2)$. Now, suppose that $N(x)\subseteq A$.  Then, if $A$ and $X$ form a $k$-coalition, then by Lemma \ref{l2delta}, each of the sets $A$ and $X$ form  $k$-coalitions with at most $\Delta(G)-k+2$ sets. Since we assumed that $A$ and $X$ form a $k$-coalition, then $C(G)\leq 2(\Delta(G)-k+1)+2=2(\Delta(G)-k+2)$. Now, assume that $A$ and $X$ do not form a $k$-coalition. Then, $A\cup X$ is not a $k$-dominating set. Hence, there exists a vertex $w$ which not $k$-dominated by $A$ and $X$.  Note that $A\cup X$ contains no vertex of $N[w]$. Let $\mathcal{P}=\{S\in {\cal C}| N[w]\cap S\neq \emptyset\}$. It is clear that ${\cal C}=\{A,X\}\cup {\cal P}$. Since $|{\cal P}|\leq \Delta(G)-k+2$, we have $C_k(G)\leq 2+\Delta(G)-k+2=\Delta(G)-k+4\leq2(\Delta(G)-k+2)$ (since $\Delta(G)\ge k+1$).\QED
			\end{itemize}
		\end{itemize}
	\end{proof} 
	
	In the following, we show that the $k$-coalition number of any $k$-regular graph is  $3$ or $4$.  
	
	\begin{corollary} 
		If $G$ is a $k$-regular graph, then $3\leq C_k(G) \leq 4$. 
	\end{corollary}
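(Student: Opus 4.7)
The corollary combines an upper bound and a lower bound; we treat them separately. The upper bound $C_k(G) \leq 4$ is obtained by adapting the case analysis of the preceding theorem to the $k$-regular setting $\Delta(G) = \delta(G) = k$ (so that $2(\Delta-\delta) + 4 = 4$). The hypothesis $\Delta \geq \delta + 1$ used in that theorem is not met, but every case still yields $|\mathcal{C}| \leq 4$ except one, for which we give a fresh argument exploiting $k$-regularity directly. The lower bound $C_k(G) \geq 3$ is obtained from a concrete three-set construction.

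\textbf{Upper bound.} Let $\mathcal{C}$ be a $c_k$-partition with $|\mathcal{C}| = C_k(G)$, pick any vertex $x$, and let $X \in \mathcal{C}$ be the set containing $x$. By Lemma~\ref{l2delta}, every set of $\mathcal{C}$ has at most $\Delta(G) - k + 2 = 2$ coalition partners. Running through the case distinctions of the preceding theorem, every case immediately gives $|\mathcal{C}| \leq 4$ except the one where $N(x) \cap X = \emptyset$, exactly two sets $A, B \in \mathcal{C}\setminus\{X\}$ intersect $N(x)$, and $A, B$ form a $k$-coalition. In this subcase, any $C \in \mathcal{C} \setminus \{X, A, B\}$ satisfies $C \cap N(x) = \emptyset$, and since neither $A$ nor $B$ alone covers all of $N(x)$, such a $C$ can partner only with $X$; moreover, $C$ cannot be a $k$-dominating set of size $k$, since $k$-regularity would then force $N(x) \subseteq C$, contradicting $A \cap N(x) \neq \emptyset$ together with $A \cap C = \emptyset$. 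The main obstacle is to rule out the possibility that both partners of $X$ lie in $\mathcal{C} \setminus \{X, A, B\}$: if $C_1, C_2$ were such partners and $v \in C_1$, then the $k$-dominance of $X \cup C_2$ forces $N(v) \subseteq X \cup C_2$, while the $k$-dominance of $A \cup B$ forces $N(v) \subseteq A \cup B$; by disjointness of the partition, $(X \cup C_2) \cap (A \cup B) = \emptyset$, so $N(v) = \emptyset$, contradicting $k \geq 1$. Hence at least one partner of $X$ lies in $\{A, B\}$, so $|\mathcal{C} \setminus \{X, A, B\}| \leq 1$ and $|\mathcal{C}| \leq 4$.

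\textbf{Lower bound.} Pick any edge $uv \in E(G)$ and set $\pi = \{\{u\}, \{v\}, V(G)\setminus\{u,v\}\}$. Since $v \notin N(v)$, all $k$ neighbors of $v$ lie in $V(G)\setminus\{v\} = \{u\} \cup (V(G)\setminus\{u,v\})$, so this union is $k$-dominating; symmetrically for $V(G)\setminus\{u\}$. For $|V(G)| \geq 3$, none of $\{u\}$, $\{v\}$, $V(G)\setminus\{u,v\}$ is itself $k$-dominating (the endpoints of $uv$ have at most $k-1$ neighbors in $V(G)\setminus\{u,v\}$), so $\{u\}$ and $V(G)\setminus\{u,v\}$ form a $k$-coalition, and likewise for $\{v\}$. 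Thus $\pi$ is a valid $k$-coalition partition of size $3$, yielding $C_k(G) \geq 3$. Throughout the proof, the disjointness argument in the upper bound is the crux: it is precisely what replaces the missing hypothesis $\Delta \geq \delta + 1$ of the preceding theorem.
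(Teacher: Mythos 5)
Your proof is correct and follows the same overall strategy as the paper: the identical three-set lower-bound construction $\{\{u\},\{v\},V\setminus\{u,v\}\}$ on an edge $uv$, and an upper bound obtained by specializing the case analysis of the preceding theorem to $\Delta=\delta=k$. The one place where you genuinely diverge is the critical subcase (no set of $\mathcal C$ meets $N(x)$ together with $x$, exactly two sets $A,B$ meet $N(x)$, and $A,B$ form a $k$-coalition), which is exactly where the earlier theorem only gives $\Delta-k+5=5$. The paper's Case~2 handles this with a rather opaque argument about a partner $S_0$ of $X$ and vertices of $N(x)$ adjacent to $S_0$, concluding $\mathcal C=\{X,S_0,S_1,S_2\}$; your replacement --- if $X$ had two partners $C_1,C_2$ disjoint from $A\cup B$, then for $v\in C_1$ regularity forces $N(v)\subseteq X\cup C_2$ and $N(v)\subseteq A\cup B$, which are disjoint --- is cleaner and actually rigorous, and you also correctly dispose of the possibility that such a $C$ is a $k$-dominating set of size $k$, a point the paper glosses over. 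Both proofs share the same harmless edge-case blemish (e.g.\ $K_2$ with $k=1$ has $C_1(K_2)=2$, so the lower bound implicitly needs $n\ge 3$, which you at least flag).
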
 
	%##############
	\begin{proof} 
		Suppose that two vertices $v_1$ and $v_2$ in $G$ are adjacent. Then  
		\[ 
		\big\{ 
		V\setminus\{v_1,v_2\}, \{v_1\}, \{v_2\} \big\}
		\] 
		is a $k$-coalition partition  of $G$ and so $C_k(G)\geq 3$. 
		Now let $\mathcal{C}$ be a
		$c_k$-partition of $G$ with  the cardinality $C_k(G)$. If there exists set $X$ in $\mathcal{C}$ such that contains two adjacent vertices, then for some vertex $x$ in $X$,
		$N(x) \cap X \neq \emptyset$. By Lemma \ref{l2delta}, $C_k(G)\leq \Delta(G)-k+3=3$. Therefore in this case $C_k(G)=3$. 
		
		If  no sets in $\mathcal{C}$ contain two adjacent vertices, then we consider two following cases:
		
		Case 1. If there exists a vertex  $x$ such that $N(x)$ has intersect with 
		just one set  or more than two set in $\mathcal{C}$, 
		then by the proof of Theorem \ref{lthm}, 
		\[ 
		C_k(G) \leq \max \big\{\Delta(G) -k + 4, ~ 2(\Delta(G)-k+2)\big\}=4. 
		\]

		Case 2. If there exists a vertex $x$ such that  $N(x)$ has intersection with two sets in $\mathcal{C}$, say $S_1$ and $S_2$. Again, we consider two cases:
		\begin{itemize}  
			\item
			
			If $S_1$ or $S_2$ form a $k$-coalition with $X$, then by Lemma \ref{l2delta}
			\[
			C_k(G)\leq 1+\Delta(G)-k+2+1 =\Delta(G)-k+4=4. 
			\]
			\item 
			
			If $X$ is a $k$-dominating set, we are done. Now  let $X$ forms a $k$-coalition with $S_0$. Since any vertex $v_0\in N(x)$ (which are not in $S_0\cup X$) is dominated by $S_0\cup X$, and the neighborhood of any vertex cannot be in the only one set in a partition, so  there is a vertex $v_0\in N(X)$ such that is adjacent to a vertex in $S_0$. So  $X$ just form $k$-coalition with $S_0$ and so $\mathcal{C}=\{X,S_0,S_1,S_2\}$. 
			Therefore we have the result. \qed 
		\end{itemize} 
	\end{proof}

	\section{$k$-coalition number of specific graphs}
	
	First let recall the definition of corona of two graphs. By taking a single instance of graph $F$ and $|V(F)|$ instances of graph $H$, and
	linking the $i$-th vertex of $F$ to each vertex in the $i$-th instance of $H$, we obtain a graph denoted as $F\circ H$. This graph  consider as the corona product of $F$ and
	$H$.

	In this section, we study the $k$-coalition number of certain graphs, such as complete graph, trees, path $P_n$, cycle $C_n$, $P_n\circ \overline{K_l}$ and $C_n\circ \overline{K_l}$. We start with complete graphs.

	\begin{theorem}
		For every $2\leq k\leq n-1$, $C_k(K_n)=n-k+2$.
	\end{theorem}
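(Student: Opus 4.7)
The plan is to prove both bounds directly, exploiting the simple structure of $k$-dominating sets in $K_n$: a proper subset $S \subsetneq V(K_n)$ is a $k$-dominating set if and only if $|S| \ge k$, since every vertex outside $S$ has exactly $|S|$ neighbors in $S$. Consequently, two disjoint non-dominating sets $U_1, U_2$ form a $k$-coalition in $K_n$ exactly when $|U_1| + |U_2| \ge k$.

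For the lower bound, I will exhibit a $k$-coalition partition of size $n-k+2$. Fix any $(k-1)$-subset $X \subseteq V(K_n)$ and let the remaining $n-k+1$ vertices be singleton parts. Both $X$ (of size $k-1 < k$) and every singleton are non-dominating, while the union $X \cup \{v\}$ has size $k$ for each singleton $\{v\}$ and is therefore $k$-dominating. Hence $X$ pairs with every singleton in a $k$-coalition, producing a valid $k$-coalition partition with $n-k+2$ parts.

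For the upper bound, let $\Theta$ be an arbitrary $k$-coalition partition, and split it into the $s$ parts that are $k$-dominating (each of size exactly $k$) and the $t$ remaining non-dominating parts; note that $t=0$ or $t \ge 2$, since a non-dominating part requires a non-dominating coalition partner. If $t = 0$, then $|\Theta| = s \le n/k$, and a direct check gives $n/k \le n-k+2$ for all $k \ge 2$ and $n \ge k+1$. If $t \ge 2$, let $M$ be the largest size of a non-dominating part. Every non-dominating $U$ must admit a partner of size at least $k-|U|$, and since no non-dominating part exceeds $M$, every such $U$ satisfies $|U| \ge k-M$. Summing sizes of the non-dominating parts,
\[
n - sk \;\ge\; M + (t-1)(k-M) \;=\; (t-1)k - (t-2)M \;\ge\; (t-1)k - (t-2)(k-1) \;=\; k + t - 2,
\]
where the final inequality uses $M \le k-1$ together with $t-2 \ge 0$. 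Rearranging gives $|\Theta| = s + t \le n - (k-1)s - k + 2 \le n-k+2$.

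The one subtlety lies in the upper bound: one must notice that the maximum non-dominating size $M$ acts as a universal bottleneck, forcing every non-dominating part to have size at least $k-M$. Once this observation is in place, the algebra collapses immediately thanks to $M \le k-1$ and the pairing constraint $t \ge 2$, so there is no genuinely hard step beyond setting up the right counting.
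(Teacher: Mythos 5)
Your proof is correct, and it is substantially more complete than the one in the paper. The paper's entire proof consists of exhibiting the partition $\{\{v_1,\dots,v_{k-1}\},\{v_k\},\dots,\{v_n\}\}$ — exactly your lower-bound construction — and declaring the rest "easy to see"; no upper-bound argument is given at all. You supply that missing half: the observation that a proper subset of $V(K_n)$ is $k$-dominating iff it has at least $k$ vertices reduces everything to counting, and your bottleneck argument (every non-dominating part has size at least $k-M$ where $M\le k-1$ is the largest non-dominating part, so the $t$ non-dominating parts consume at least $k+t-2$ vertices) cleanly yields $s+t\le n-k+2$. The case split on $t=0$ versus $t\ge 2$ is handled correctly, including the point that $t=1$ is impossible because a non-dominating part needs a non-dominating partner, and the point that $k$-dominating parts must have exactly $k$ members by the definition of a $k$-coalition partition. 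In short, you and the paper agree on the construction, but your write-up is the one that actually proves the equality rather than just the inequality $C_k(K_n)\ge n-k+2$.
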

	\begin{proof}
		It is easy to see that the partition $\pi=\{\{v_1,v_2,...,v_{k-1}\},\{v_k\},\{v_{k+1}\},...,\{v_n\}\}$ is a $k$-coalition partition of $K_n$. \qed
	\end{proof} 
	
	The following theorem gives a lower bound for the $k$-coalition number of complete bipartite graph. 
	
	\begin{theorem}
		If $K_{s,t}$ is a complete bipartite graph and $s\leq t$, then $C_k(K_{s,t})\geq t-k+2.$
	\end{theorem}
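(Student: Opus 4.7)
The plan is to exhibit an explicit $k$-coalition partition of $K_{s,t}$ with exactly $t-k+2$ sets (in the generic regime $k\ge 2$, $s\ge k+1$). Write $A=\{a_1,\ldots,a_s\}$ and $B=\{b_1,\ldots,b_t\}$ for the two color classes and consider
\[
\Theta=\{X,\ \{a_1,b_k\},\ \{b_{k+1}\},\ \{b_{k+2}\},\ \ldots,\ \{b_t\}\},
\]
where $X=(A\setminus\{a_1\})\cup\{b_1,\ldots,b_{k-1}\}$. A count gives $|\Theta|=1+1+(t-k)=t-k+2$, as required.

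The first step is to verify that $X$ itself is not a $k$-dominating set. The vertex $a_1\in A\setminus X$ has only $|X\cap B|=k-1<k$ neighbors in $X$, so $X$ fails to $k$-dominate $a_1$. This pivotal observation implicitly uses $s\ge k+1$, which I also rely on below to ensure $|X\cap A|=s-1\ge k$.

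The main step is to show that $X$ forms a $k$-coalition with each of the remaining members of $\Theta$. For the pair $(X,\{a_1,b_k\})$ the union is $A\cup\{b_1,\ldots,b_k\}$; it contains $A$, and each $b\in B$ outside has $s\ge k$ neighbors in $A$, so it is $k$-dominating. For each pair $(X,\{b_l\})$ with $l\ge k+1$, the union has $s-1\ge k$ vertices in $A$ and exactly $k$ vertices in $B$: the lone outside $A$-vertex $a_1$ picks up $k$ $B$-neighbors in the union, and every outside $b$-vertex has $s-1\ge k$ $A$-neighbors in $X\cap A$, so again the union is $k$-dominating. The partners $\{a_1,b_k\}$ and each singleton $\{b_l\}$ are plainly not $k$-dominating when $k\ge 2$: the pair $\{a_1,b_k\}$ leaves every $a_j$ ($j\ne 1$) with a single neighbor in it, and any singleton is trivially too small.

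The crux of the construction is engineering the seed set $X$ to be simultaneously non-$k$-dominating and just one vertex short of $k$-domination from \emph{both} sides of the bipartition; the asymmetric recipe of $s-1$ vertices from $A$ together with $k-1$ vertices from $B$ is what makes every other element of $\Theta$ into a legitimate coalition partner of $X$. The boundary case $s=k$, in which $|X\cap A|<k$ and the coalition verification for the $\{b_l\}$-singletons breaks down, would be handled by a separate ad hoc construction, for instance by splitting $A$ and $B$ each into two parts to produce a four-set partition.
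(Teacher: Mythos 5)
In the regime you call generic ($k\ge 2$ and $s\ge k+1$) your construction is correct, and it in fact quietly repairs a defect in the paper's own construction: the paper places all of $A$ together with $b_1,\ldots,b_{k-1}$ into the large set and leaves $b_k,\ldots,b_t$ as singletons, but for $s\ge k$ that large set already $k$-dominates every remaining $b_j$, hence is a $k$-dominating set of size $s+k-1>k$ and is not permitted to appear in a $k$-coalition partition at all. Your extraction of $a_1$ into the pair $\{a_1,b_k\}$ is precisely what makes the large set legal while preserving the count $t-k+2$, and your verifications of the coalitions $(X,\{a_1,b_k\})$ and $(X,\{b_l\})$ are sound under the stated hypotheses.

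The genuine gap is the deferred boundary case, and it cannot be closed. For $s=k$ your check of the pairs $(X,\{b_l\})$ fails because $|X\cap A|=k-1$, and the fallback you sketch, a four-set partition, only certifies $C_k(K_{k,t})\ge 4$, which is strictly weaker than $t-k+2$ once $t\ge k+3$. Worse, the inequality is actually false there: when $s=k$ a set is $k$-dominating if and only if it contains all of $A$ or all of $B$, so every coalition pair must cover an entire side of the bipartition with just two parts, and a short case analysis on how many parts meet $A$ and how many meet $B$ yields $C_k(K_{k,t})\le 4$; concretely $C_2(K_{2,5})=4<5=t-k+2$. The case $s<k$ is worse still, since every vertex of $B$ then has degree less than $k$ and must lie in every $k$-dominating set, forcing $C_k\le 2$. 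So no ``separate ad hoc construction'' can discharge the deferral: what your argument actually establishes is the inequality for $k\ge 2$ and $k+1\le s\le t$, and both the statement and the paper's one-line proof need to be qualified accordingly.
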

	\begin{proof}
		Suppose that $X=\{v_1,v_2, \dots,v_s\}$ and $Y=\{v'_1,v'_2,\dots,v'_t\}$ are two parts of $K_{s,t}$. By considering the following partition we have the result:
		\[ 
		\Big\{{
			\{v_1,v_2,\dots, v_{s},v'_1,v'_2,\dots,v'_{k-1}\},\{v'_k\},\dots,\{v'_{t-1}\},\{v'_t\}
		}\Big\}.
		\]\qed
	\end{proof}

	Using Theorem \ref{lthm}, we have the following result. 
	\begin{corollary}
		For every tree $G$, 	$C_2(G)\leq \Delta(G)+1$.
	\end{corollary}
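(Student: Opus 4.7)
The plan is to apply the first of the two theorems labeled \ref{lthm} (the one stating that $C_k(G) \le \Delta(G)-k+3$ whenever $k>\delta(G)$) directly, with the specialization $k=2$. The key observation that unlocks this application is that in any tree $G$ with at least two vertices, there exists a leaf, and hence $\delta(G)=1$. This is a standard fact about trees and requires no real argument beyond invoking the existence of a vertex of degree one in a finite acyclic connected graph.

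With $\delta(G)=1$ in hand, I would verify that the hypothesis $k>\delta(G)$ of the cited theorem is satisfied for $k=2$, since $2>1$. The theorem then immediately yields
\[
C_2(G)\le \Delta(G)-2+3=\Delta(G)+1,
\]
which is exactly the desired inequality. No induction, no case analysis, and no separate treatment of structural features of the tree is needed, because all of that work has already been absorbed into the general bound proved earlier.

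The only genuinely delicate point, and the closest thing to an obstacle, is the degenerate case of the one-vertex tree $K_1$, where $\delta(G)$ is not well suited to the hypothesis $k>\delta(G)$ and where the notion of a $2$-coalition partition is essentially vacuous; I would handle this by the standing convention that the statement is intended for trees with at least two vertices, or simply by inspection. Apart from that, the proof is a one-line corollary of the preceding theorem.
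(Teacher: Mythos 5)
Your proof is correct and is exactly the route the paper takes: the corollary is stated as an immediate consequence of the theorem $C_k(G)\le \Delta(G)-k+3$ for $k>\delta(G)$, applied with $k=2$ and $\delta(G)=1$ for a tree. Your handling of the trivial $K_1$ case is a reasonable extra precaution that the paper leaves implicit.
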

	
	\begin{corollary}
		For every $k\in \mathbb{N}$, there exists a tree $T$ with maximum degree $k$ and 
		$C_2(T)=k+1$. 
	\end{corollary}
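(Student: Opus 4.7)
The plan is to exhibit, for each $k \in \mathbb{N}$, an explicit tree $T_k$ of maximum degree $k$ together with a $2$-coalition partition of size $k+1$. Since the preceding corollary already gives the upper bound $C_2(T_k) \le \Delta(T_k) + 1 = k+1$, equality will follow immediately.

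The tree $T_k$ I propose has a central vertex $v_0$ of degree $k$, with one pendant leaf $\ell_0$ attached to $v_0$ and $k-1$ further neighbors $v_1, \dots, v_{k-1}$ of $v_0$; for $k \ge 3$ I attach two pendant leaves to each $v_i$ (so $\deg(v_i) = 3$ and $\Delta(T_k) = k$), for $k = 2$ I attach a single pendant leaf to $v_1$ (giving $T_2 = P_4$), and for $k = 1$ I take $T_1 = K_2$. I then consider the partition $\Theta = \{X, \{v_0\}, \{v_1\}, \dots, \{v_{k-1}\}\}$, where $X$ is the set of all leaves of $T_k$. The verification proceeds in three steps: first, $X$ is not $2$-dominating, because $v_0 \notin X$ has only $\ell_0$ among its neighbors in $X$; second, each singleton $\{v_i\}$ is trivially not $2$-dominating; third, every union $X \cup \{v_i\}$ is $2$-dominating. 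The third point is the computational core: for $i = 0$, each $v_j$ outside the union has all its leaf neighbors in $X$, yielding at least two neighbors in the union; for $i \ge 1$, the center $v_0$ is $2$-dominated by $\ell_0 \in X$ together with $v_i$, and each remaining $v_j$ still has all of its leaves in $X$. Hence every set of $\Theta$ has a coalition partner in $\Theta$, and $|\Theta| = k+1$.

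The main obstacle, and what drives the whole design, is the tension between two competing demands on $X$: it must fail to $2$-dominate (otherwise it cannot serve as a coalition partner), yet adjoining any single $v_i$ must repair that deficit. The asymmetric structure -- a lone pendant $\ell_0$ at $v_0$ creating a deficit of exactly one witness at $v_0$, paired with fully padded branches at each $v_i$ so that absorbing any one of them never disturbs the $2$-domination at the remaining $v_j$'s -- is precisely what reconciles the two requirements. The small cases $k = 1$ and $k = 2$ need their own (but routine) checks, since the ``two-leaves per $v_i$'' padding recipe would otherwise push the maximum degree above $k$.
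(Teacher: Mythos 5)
Your proof is correct and follows essentially the same strategy as the paper: exhibit an explicit spider-like tree, take the partition consisting of the leaf set together with singletons, and combine the resulting lower bound with the upper bound $C_2(T)\le\Delta(T)+1$ from the preceding corollary. Your version is in fact more careful than the paper's, whose construction (attach two pendant vertices to every leaf of $K_{1,k}$) has maximum degree $3>k$ when $k\le 2$; your separate treatment of $k=1$ (via $K_2$) and $k=2$ (via $P_4$), and your explicit verification that each union $X\cup\{v_i\}$ is $2$-dominating, fill in details the paper leaves unstated.
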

	\begin{proof}
		Consider the star $K_{1,k}$ and pendant two vertices to each leaves of it (Figure \ref{fs1}). The $2$-coalition number of this tree is $k+1$. \qed
	\end{proof}

	\begin{figure}[ht]
		\begin{center}
			\includegraphics[width=0.6\linewidth]{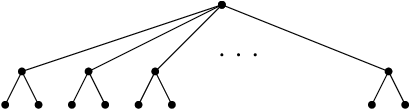}
			\caption{A tree $T$ with maximum degree $k$ and 
				$C_2(T)=k+1$. }
			\label{fs1}
		\end{center}
	\end{figure}

	Now, using Theorem \ref{lthm}, we prove the following result. 
	\begin{theorem}
		For any path $P_n$
		\begin{equation*}
		C_2(P_n)=\left\{
		\begin{aligned}[c]
		1 &\text{~~~} n=1,2 \\
		2 & \text{~~~}n=3 \\
		3 & \text{~~~}n\geq 4.
		\end{aligned}\right.
		\end{equation*}
	\end{theorem}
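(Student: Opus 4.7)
The plan is to handle the three regimes separately. The upper bound $C_2(P_n) \leq 3$ for every $n \geq 3$ follows immediately from the tree corollary just proved, $C_2(G) \leq \Delta(G) + 1$, since $\Delta(P_n) = 2$. For the trivial cases $n \in \{1,2\}$ the graph is too small for nontrivial coalitions (note that $\delta(P_n) < 2$, so the existence theorem does not apply), and the stated value $C_2(P_n) = 1$ corresponds to the single-block partition.

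For $n \geq 4$ I would exhibit the explicit partition $\Theta = \{X_1, X_2, X_3\}$ with
\[
X_1 = \{v_1, v_n\}, \qquad X_2 = \{v_i : 2 \leq i \leq n-1,\ i \text{ even}\}, \qquad X_3 = \{v_i : 2 \leq i \leq n-1,\ i \text{ odd}\},
\]
that is, the two leaves paired together and the interior $2$-coloured by parity. Two things must be checked. First, none of the $X_i$ is itself a $2$-dominating set of $P_n$: any $2$-dominating set must contain both leaves $v_1$ and $v_n$ (each leaf has only one neighbour, hence cannot be $2$-dominated from outside), so $X_2$ and $X_3$ are ruled out; and $X_1$ itself fails because $v_2 \notin X_1$ has only the single neighbour $v_1$ in $X_1$. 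Second, both unions $X_1 \cup X_2$ and $X_1 \cup X_3$ are $2$-dominating: a vertex $v_i$ missing from such a union is interior and belongs to one parity class, so its two neighbours $v_{i-1}, v_{i+1}$ either have the opposite parity or are leaves, and in every case they lie in the union. Hence $X_2$ and $X_3$ each form a $2$-coalition with $X_1$, and $X_1$ forms a $2$-coalition with either of them, so $\Theta$ is valid and $C_2(P_n) \geq 3$.

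The case $n = 3$ must be verified by hand, because the construction above degenerates when the interior $\{v_2\}$ consists of a single vertex. For the lower bound $C_2(P_3) \geq 2$ I would take $\bigl\{\{v_1\},\,\{v_2, v_3\}\bigr\}$: neither part $2$-dominates $P_3$, yet their union is $V(P_3)$, so they form a $2$-coalition. For the matching upper bound $C_2(P_3) \leq 2$, the only partition of $P_3$ into three blocks is $\bigl\{\{v_1\}, \{v_2\}, \{v_3\}\bigr\}$; and in it $\{v_2\}$ cannot form a $2$-coalition with either singleton, since each of the two unions $\{v_1, v_2\}$ and $\{v_2, v_3\}$ leaves the opposite leaf with at most one neighbour in the set.

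The only place the argument could subtly slip is in the $2$-domination check for $X_1 \cup X_j$ near the ends of the path: the interior vertices $v_2$ and $v_{n-1}$, which are each adjacent to a leaf, recover their second neighbour in the union precisely because both leaves have been deliberately grouped into the same block $X_1$. This is the same observation that prevents a $2$-coalition partition of size $\geq 3$ with $v_1$ and $v_n$ in distinct blocks, and it is what makes the uniform construction work for every $n \geq 4$.
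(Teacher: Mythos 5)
Your proposal is correct and follows essentially the same route as the paper: the upper bound $C_2(P_n)\le 3$ comes from the $\Delta(G)-k+3$ bound (Theorem~\ref{lthm}), the lower bound for $n\ge 4$ uses exactly the same partition $\bigl\{\{v_1,v_n\},\ \text{even interior},\ \text{odd interior}\bigr\}$, and the small cases are handled by inspection. The only difference is that you spell out the verification that the three sets are not $2$-dominating and that the relevant unions are, which the paper leaves implicit.
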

	\begin{proof}
		It is easy to verify that form $n\leq 3$, $C_2(P_1)=1$, $C_2(P_2)=1$, and $C_2(P_3)=2$. Now, assume that the path $P_n$ with  $n\geq 4$.  By Theorem \ref{lthm}, for any path $P_n$ we have $C_2(P_n)\leq \Delta(P_n)-2+3=3$. Now, we have a 2-coalition partition of cardinality  3 for $P_n$ as follows.
		$$\left\{\{v_1,v_n\},  \{v_{2i}|1<2i< n\}, \{v_{2i+1}|1<2i+1<n\}\right\}.$$ \qed
	\end{proof} 
	
	To obtain the $2$-coalition number of cycles, we need the following easy lemma: 
	\begin{lemma}\label{l1}
		If  $S \subseteq V(C_n)$ is  a $2$-dominating set of $C_n$, then $|S|\geq \dfrac{n}{2}$.
	\end{lemma}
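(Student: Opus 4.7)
The plan is to use a simple double counting argument on the edges between $S$ and its complement. Every vertex in $C_n$ has degree exactly $2$, so for any vertex $v \in V(C_n)\setminus S$, the $2$-domination condition forces both of its neighbors to lie in $S$. Counting the edges with one endpoint in $V(C_n)\setminus S$ and the other in $S$ from the side of $V(C_n)\setminus S$ therefore gives exactly $2(n-|S|)$ such edges.

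Now I count the same edges from the $S$ side. Each vertex of $S$ also has degree $2$ in $C_n$, so it can contribute at most $2$ edges to $V(C_n)\setminus S$. This yields the upper bound $2|S|$ on the number of edges between $S$ and $V(C_n)\setminus S$. Combining the two estimates gives $2(n-|S|)\le 2|S|$, which rearranges to $|S|\ge n/2$, as required.

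An alternative route, which I would mention but not carry out, is to observe that $V(C_n)\setminus S$ must be an independent set in $C_n$ (two adjacent vertices outside $S$ would each have at most one neighbor in $S$), so $|V(C_n)\setminus S|\le \lfloor n/2\rfloor$ and hence $|S|\ge \lceil n/2\rceil\ge n/2$. There is no real obstacle here; the only thing to be careful about is the direction of the inequality when a vertex of $S$ happens to have both neighbors in $S$, but this only strengthens the bound, so the argument goes through unchanged.
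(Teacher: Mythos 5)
Your double counting of the edges between $S$ and $V(C_n)\setminus S$ is exactly the argument in the paper: each vertex outside $S$ contributes two such edges, each vertex of $S$ at most two, giving $2(n-|S|)\le 2|S|$. The proof is correct and essentially identical to the paper's.
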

	\begin{proof} 
		For every  $v \in V\backslash S$, $\deg_{S}(v)=2$ and 
		$  \sum_{v\in V\backslash S} \deg_{S}(v)=2(n-|S|)$. So by definition, 
		\[ 
		2(n-|S|) \leq \sum_{v\in  S} \deg(v) = 2|S|,
		\]
		and so  $	\dfrac{n}{2} \leq |S|$. \qed 
		
	\end{proof}
	
	\begin{theorem}
		If $\Theta = \{S_0, S_1, \dots , S_t\}$ is  a $2$-coalition partition for $C_n$ and every  $S_i \in \Theta $ does not  contain consecutive vertices of $C_n$, then    
		$n$   is even and $C_2(C_n)=4$.
	\end{theorem}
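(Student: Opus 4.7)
The central observation is that for any $2$-coalition $(S_i, S_j)$ inside $\Theta$, the complement $V(C_n)\setminus(S_i\cup S_j)$ is an independent set of $C_n$: since $S_i\cup S_j$ is $2$-dominating, every vertex $v$ outside it must have both of its two cycle-neighbours inside, so no two consecutive vertices can lie outside. I would prove this as a preliminary claim. Combined with Lemma~\ref{l1}, which shows that no $2$-element subset is $2$-dominating once $n\ge 5$, one sees that every $S_i\in\Theta$ must form a coalition with some partner. Thus the ``coalition graph'' $\mathcal{H}$ on the vertex set $\Theta$, whose edges are the coalition pairs, has minimum degree at least~$1$.

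The heart of the argument is a case analysis of $\mathcal{H}$. \emph{Matching case:} if there exist two disjoint coalition edges $(S_0,S_1)$ and $(S_2,S_3)$, set $A=S_0\cup S_1$ and $B=S_2\cup S_3$. The structural observation applied to each edge yields that both $V\setminus A$ and $V\setminus B$ are independent. If $|\Theta|=4$, then $V=A\sqcup B$ with both $A,B$ independent, so $C_n$ is bipartite and $n$ is even. If instead $|\Theta|\ge 5$, an extra set $S_4$ lies in both complements; then the independence of $S_4\cup B$ and $S_4\cup A$ forces $S_4$ to have no cycle-edges to $A\cup B=V\setminus S_4$, contradicting $\deg_{C_n}(v)=2$. \emph{Star case:} if $\mathcal{H}$ admits no matching of size two, then all its edges share a common vertex $S_0$, so $\mathcal{H}$ is a star centred at $S_0$. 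Applying the observation to each edge $(S_0,S_i)$ and intersecting the resulting independent complements gives $V\setminus S_0=\bigsqcup_{i\ne 0}S_i$ independent, so $|S_0|\ge n/2$; since $S_0$ is itself independent, $|S_0|\le n/2$, forcing $|S_0|=n/2$ and $n$ even. But then $S_0$ is one colour class of the proper $2$-colouring of $C_n$, hence itself a $2$-dominating set, contradicting the requirement that $S_0$ be non-$2$-dominating (since it must form a coalition). So the star case cannot occur, and we conclude $|\Theta|\le 4$, with equality forcing $n$ even.

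For the matching lower bound $C_2(C_n)\ge 4$ in the even case, I would display an explicit four-set independent partition---e.g.\ grouping vertices of $C_n$ by residue modulo~$4$ along the cycle, with a small tweak when $n\equiv 2\pmod 4$---and check directly that the two natural pairings each form $2$-coalitions. Combined with $|\Theta|\le 4$ from above, this yields $C_2(C_n)=4$. The main obstacle, as I see it, is the bookkeeping in the $|\Theta|\ge 5$ sub-case of the matching argument: propagating the ``complement is independent'' constraints through several overlapping coalition pairs requires care to verify that no cycle-placement of the extra sets is feasible, and one has to be similarly careful in the star case to rule out mixed configurations in which $\mathcal{H}$ is simultaneously star-like and contains a triangle.
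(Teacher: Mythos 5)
Your preliminary observation (the complement of a coalition's union is independent) and your matching-case analysis are both correct, and they are cleaner than the paper's counting argument via Lemma~\ref{l1}. The star case with at least three leaves also goes through as you sketch it, and the two-leaf star is easily eliminated: if $S_1$ and $S_2$ do not form a coalition, some $w\in S_0$ has at most one neighbour in $S_1\cup S_2$ and hence a neighbour inside $S_0$, contradicting the hypothesis that $S_0$ contains no consecutive vertices. The genuine gap is the configuration you defer to your last sentence, namely $\mathcal{H}$ a triangle on three sets, and it is not a bookkeeping nuisance: the statement as written is \emph{false} there, so no argument can close it. Take $C_5$ with $\Theta=\{\{v_1\},\{v_2,v_4\},\{v_3,v_5\}\}$. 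No set contains two consecutive vertices, none of the three sets is $2$-dominating, and all three pairwise unions are $2$-dominating, so $\Theta$ satisfies the hypothesis; yet $n=5$ is odd and $C_2(C_5)=3$. The analogous partition $\{\{v_1\},\{v_2,v_4,\dots,v_{n-1}\},\{v_3,v_5,\dots,v_n\}\}$ works for every odd $n\ge 5$. In your framework the failure is localized in the step ``intersecting the resulting independent complements gives $V\setminus S_0$ independent'': with only two other sets, the edge $(S_0,S_1)$ certifies that $S_2$ is independent and $(S_0,S_2)$ certifies that $S_1$ is independent, but nothing certifies that $S_1\cup S_2$ is independent, so the inequality $|S_0|\ge n/2$ never gets off the ground.

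For what it is worth, the paper's own proof (which argues differently, by counting with Lemma~\ref{l1} rather than via independence and bipartiteness) has the same blind spot: its Case~1 adds $|S_i|+|S_0|\ge n/2$ for some $i\ge 3$ to $|S_1|+|S_2|\ge n/2$, tacitly assuming a fourth set exists; when $|\Theta|=3$ the count collapses and nothing forces $n$ to be even. Your argument does establish the correct weaker statement that such a $\Theta$ with at least four parts forces $n$ even and $|\Theta|=4$, so under the added hypothesis $t\ge 3$ your outline can be completed. Note also that, even granting the upper bound on $|\Theta|$, the conclusion $C_2(C_n)=4$ requires a separate bound on partitions in which some set \emph{does} contain consecutive vertices; in the paper that situation is only treated in the subsequent lemma.
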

	
	\begin{proof}
		We consider two cases: 
		
		Case 1:  	
		There is a vertex $v_0 \in S_0$ such that is adjacent to different partition $S_1$ and $S_2$. 
		Every $S_i \in \Theta\backslash\{S_0,S_1,S_2\} $ form a $2$-coalition with $S_0$. So 
		\[
		\dfrac{n}{2} \leq |S_i\cup S_0|=|S_i|+|S_0|.
		\]
		
		Since  $\deg_{V\backslash\{S_1,S_2\}}(v_0)=0$, so $S_1$ and $S_2$ form a $2$-coalition. Therefore  
		\[ \dfrac{n}{2}+\dfrac{n}{2} \leq |S_i|+|S_0|+|S_1|+|S_2|\leq n=|V|, \]
		and so 
		\[
		|S_i|+|S_0|=\dfrac{n}{2},~~|S_1|+|S_2|=\dfrac{n}{2}.
		\]
		Therefore $n$ is even and $|\Theta|=4$.
		
		Case 2: We do not have any vertex adjacent to different partition. So the vertices must be alternatively  in the same partition $S_0$. Therefore $n$ is even and $S_0$ is $2$-dominating set. Hence  $|S_0|=2$ and $n=4$. \qed 
		
	\end{proof}

	\begin{lemma}
		The	$2$-coalition number of odd cycles is $3$. 
	\end{lemma}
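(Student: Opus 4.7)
The plan is to prove matching bounds $C_2(C_n)\ge 3$ and $C_2(C_n)\le 3$ for odd $n$.

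For the lower bound I would exhibit the partition
\[
S_1=\{v_i : 1\le i\le n-2,\ i\text{ odd}\},\quad S_2=\{v_i : 2\le i\le n-1,\ i\text{ even}\},\quad S_3=\{v_n\},
\]
and verify directly that none of $S_1,S_2,S_3$ is itself $2$-dominating while each pairwise union is. For $S_1\cup S_2=V(C_n)\setminus\{v_n\}$ this is immediate, since the neighbors $v_1,v_{n-1}$ of $v_n$ both lie in the union. For $S_1\cup S_3$ the complement $S_2$ is independent in $C_n$ (using that $n$ is odd), and every even-indexed $v_{2j}$ has its two odd-indexed neighbors $v_{2j-1},v_{2j+1}$ inside $S_1\cup\{v_n\}$; the case $S_2\cup S_3$ is symmetric. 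This yields a $2$-coalition partition of size $3$.

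For the upper bound, let $\Theta$ be an arbitrary $2$-coalition partition of $C_n$. The preceding theorem implies that if no set of $\Theta$ contained two consecutive vertices of $C_n$ then $n$ would have to be even, so since $n$ is odd there exists $X\in\Theta$ with two consecutive vertices $v_i,v_{i+1}\in X$. Setting $x=v_i$, one neighbor of $x$ already lies in $X$, hence $|N(x)\setminus X|\le 1$. For any distinct $A,B\in\Theta\setminus\{X\}$, a hypothetical $2$-coalition would require $A\cup B\subseteq V\setminus X$ to $2$-dominate $x\notin A\cup B$, but $A\cup B$ can contain at most one neighbor of $x$. Hence no two members of $\Theta\setminus\{X\}$ can coalition with one another, so every set in $\Theta\setminus\{X\}$ is either itself a $2$-dominating set of size $2$ or a $2$-coalition partner of $X$.

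A quick structural check rules out the first alternative for almost all $n$: if $S=\{u,w\}\subseteq V(C_n)$ is $2$-dominating, every vertex in $V\setminus S$ must have $\{u,w\}$ as its entire neighborhood, and in a cycle at most one vertex admits any prescribed pair as its neighborhood, forcing $n-2\le 1$. Consequently, for odd $n\ge 5$ every set of $\Theta\setminus\{X\}$ is a coalition partner of $X$; Lemma \ref{l2delta} with $\Delta(C_n)=k=2$ bounds the number of such partners by $\Delta-k+2=2$, so $|\Theta|\le 3$. For $n=3$ the bound $|\Theta|\le 3=|V(C_3)|$ is automatic. The main obstacle is the step that forbids coalitions inside $\Theta\setminus\{X\}$; once that is done, Lemma \ref{l2delta} closes the argument.
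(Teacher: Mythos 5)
Your proof is correct and follows essentially the same route as the paper's: both exhibit the same alternating-plus-singleton partition for the lower bound, and for the upper bound both locate a set $X$ containing two consecutive vertices of the cycle and argue that every other set of the partition must be a $2$-coalition partner of $X$, the number of such partners being at most two. Your write-up is in fact slightly tidier, since you invoke Lemma~\ref{l2delta} for the final count and explicitly dispose of the two-element $2$-dominating-set alternative and the case $n=3$, points the paper's proof passes over.
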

	\begin{proof}
		Suppose that $\Theta=\{S_1,S_2,\dots, S_t\}$  is a $2$-coalition of $C_n$, where  $n>4$ is odd. Then some $S_i \in \Theta$ contain two consecutive vertices. 
		
		Suppose that $V(C_n)=\{v_1,v_2,...,v_n\}$ and $E(C_n)=\big\{\{v_1,v_2\},\{v_2,v_3\}, ...,\{v_{n-1},v_n\},\{v_n,v_1\}\big\}$. 
		Without loss of generality, suppose that $v_1, v_2$ are in $S_i$. Since $\deg_{V\setminus S_i}(v_1)\leq 1$ so  every $S_j \in \Theta\setminus S_i$ form a $2$-coalition with $S_i$. 
		Note that   $S_i$ cannot be a $2$-dominating set, since $S_i$  is element of the $2$-coalition partition. Therefore $V\backslash S_i$ contain consecutive vertices. Assume that $V_1,V_2,...,V_r$ are  subsets of $V\backslash S_i$ such that their vertices are consecutive. 
		Every $S_j \in \Theta \backslash S_i$ and $S_i$ form a $2$-coalition. If $S_j\cup S_i \subsetneq V$ then the vertices  of  $V_q$ ($1\leq q\leq r$)
		must be alternately contained in $S_i$ and there exist just $2$ partition with this conditions. So $|\Theta|\leq 3.$
		
		For every $C_n$ with $n\geq 4$, we have a $2$-coalition partition with 3 element as follows:
		\[\big\{ \{v_1\},  \{v_{2i}|2\leq 2i\leq n\}, \{v_{2i+1}|2\leq 2i+1\leq n\}    \big\}.\]
		Therefore we have the result. 	
		\qed   
	\end{proof}

	\begin{corollary}
		The $2$-coalition number of $C_n$ is:
		\[ 
		C_2(C_n)=\begin{cases}
		1 & n=1 \\
		4& n  \text{ is even}\\
		3& n  \text{ is odd}.
		\end{cases}
		\]
	\end{corollary}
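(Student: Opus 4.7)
The corollary follows directly by stitching together the preceding theorem, the preceding lemma, and a matching construction for even cycles. The case $n=1$ reduces to the only admissible partition of a single vertex. For odd $n \geq 3$, the preceding lemma already pins $C_2(C_n)$ to $3$, so nothing new is required there. The substance of the proof therefore lies in the even case, where I must establish both $C_2(C_n) \leq 4$ and $C_2(C_n) \geq 4$.

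For the upper bound when $n$ is even, I would proceed by a dichotomy on an arbitrary $2$-coalition partition $\Theta$ of $C_n$. If every member of $\Theta$ is an independent set of $C_n$ (no two consecutive vertices), then the preceding theorem applies verbatim and forces $|\Theta| = 4$. Otherwise, some $S_i \in \Theta$ contains two consecutive vertices, and the argument from the proof of the odd-cycle lemma goes through unchanged: it invokes only the existence of such an $S_i$ (to force every other block to be a coalition partner of $S_i$ and to pin down those partners up to two alternating patterns on $V \setminus S_i$), never the parity of $n$. That argument yields $|\Theta| \leq 3$. Combining the two subcases gives $|\Theta| \leq 4$.

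For the matching lower bound, I would exhibit the explicit partition
\[
\Theta \;=\; \Big\{\{v_1\},\; \{v_2\},\; \{v_3, v_5, \dots, v_{n-1}\},\; \{v_4, v_6, \dots, v_n\}\Big\}.
\]
A direct verification shows that $\{v_1\}$ coalesces with $\{v_3, v_5, \dots, v_{n-1}\}$ to the full set of odd-indexed vertices, which $2$-dominates every even-indexed vertex $v_{2i}$ (since both of its neighbors $v_{2i-1}, v_{2i+1}$ lie in the union), and symmetrically $\{v_2\}$ coalesces with $\{v_4, v_6, \dots, v_n\}$; none of the four blocks is by itself $2$-dominating, and this exploits $n$ being even so that the last odd index is $n-1$ and the last even index is $n$.

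The main (mild) obstacle is confirming that the upper-bound proof imported from the lemma is genuinely parity-independent: one must check that the step forcing the coalition partners of $S_i$ into an alternating pattern on the runs of $V \setminus S_i$ depends only on the hypothesis that $S_i$ contains two adjacent vertices, not on any arithmetic involving $n$. Once this is in place, the three pieces combine into the stated formula.
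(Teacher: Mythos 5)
Your proof is correct and follows the same route the paper intends: the corollary is stated there without its own proof, as an immediate combination of the preceding theorem (partitions into independent sets force $n$ even and exactly four parts) and the odd-cycle lemma. Your added details --- the explicit four-set partition $\{\{v_1\},\{v_2\},\{v_3,v_5,\dots,v_{n-1}\},\{v_4,v_6,\dots,v_n\}\}$ for even $n$ and the check that the lemma's upper-bound argument is parity-independent once some block contains two consecutive vertices --- are accurate and in fact supply steps the paper leaves implicit.
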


	Using Theorem \ref{lthm}, we obtain the following result.
	\begin{corollary}\label{th2}
		For any cycle $C_n$ and path $P_n$, we have $C_2(C_n \circ K_1)=4$ and $C_2(P_n \circ K_1)=4$.
	\end{corollary}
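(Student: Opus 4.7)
The plan is to establish the equality $C_2(G) = 4$ for $G \in \{C_n \circ K_1, P_n \circ K_1\}$ by proving matching upper and lower bounds. The upper bound follows immediately from Theorem~\ref{lthm}: every pendant vertex has degree $1$, so $\delta(G) = 1 < 2 = k$, while the maximum degree $\Delta(G) = 3$ is attained at every cycle/path vertex (with two neighbours along the cycle/path plus one pendant). Hence $C_2(G) \leq \Delta(G) - 2 + 3 = 4$.

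For the matching lower bound I would exhibit an explicit 2-coalition partition of size four. The guiding observation is that any 2-dominating set of $G$ must contain every pendant, since each pendant has only one neighbour and thus cannot have two neighbours in any set. In any 2-coalition partition this forces all pendants to sit in a single part $S_1$: if pendants were distributed across two or more parts, then some pendant-free part would have no coalition partner, as no union of two parts could contain every pendant. The three remaining parts $S_2, S_3, S_4$ then consist purely of cycle/path vertices.

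Labelling the cycle/path vertices $v_1, \ldots, v_n$ and the pendants $u_1, \ldots, u_n$ with $u_i$ adjacent to $v_i$, for $C_n \circ K_1$ I would take
\[
S_1 = \{u_1, \ldots, u_n\} \cup \{v_4, \ldots, v_n\}, \quad S_2 = \{v_1\}, \quad S_3 = \{v_2\}, \quad S_4 = \{v_3\},
\]
where $\{v_4, \ldots, v_n\}$ is interpreted as empty when $n = 3$. A short check shows that $v_2 \notin S_1$ has only $u_2$ as a neighbour in $S_1$, so $S_1$ is not $2$-dominating, while each union $S_1 \cup S_j$ ($j = 2, 3, 4$) picks up the missing neighbour and becomes $2$-dominating. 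For $P_n \circ K_1$ the construction follows the same template: place in $S_1$ all pendants together with enough path vertices so that exactly one internal vertex $v_i$ is left undominated, and let $S_2, S_3, S_4$ be the singletons $\{v_{i-1}\}$, $\{v_i\}$, $\{v_{i+1}\}$.

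The main obstacle lies in the path case, where the degree-$2$ endpoints demand extra care: if $S_1$ fails to dominate an endpoint, then the two-element closed neighbourhood of that endpoint cannot be partitioned disjointly among three nonempty parts. Hence $S_1$ must already dominate both endpoints of $P_n$, and the undominated vertex has to be an internal one whose three-element closed neighbourhood splits evenly among $S_2, S_3, S_4$. The cycle case sidesteps this difficulty thanks to rotational symmetry.
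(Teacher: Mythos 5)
Your proposal follows the same two-step template as the paper: the upper bound $C_2(G)\le\Delta(G)-2+3=4$ from Theorem~\ref{lthm} (since $\delta(G)=1<2$ and $\Delta(G)=3$), and a lower bound via an explicit four-set partition consisting of one large set containing all pendants together with three consecutive spine vertices taken as singletons. Where you genuinely diverge is in the path case, and there your version is the more careful one: the paper removes the \emph{last} three spine vertices $v_{n-2},v_{n-1},v_n$, but then $S_1\cup\{v_{n-2}\}$ fails to $2$-dominate the endpoint $v_n$, whose only neighbours are $v_{n-1}$ and its pendant; your insistence that the three removed vertices be internal (so that $S_1$ already $2$-dominates both endpoints) repairs exactly this defect. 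One caveat you should make explicit: your internal triple requires $n\ge 5$, and this is not merely a limitation of the construction. For $P_3\circ K_1$ and $P_4\circ K_1$ one can check directly that every $2$-coalition partition of size $4$ is impossible (all pendants must lie in a single part, which by Lemma~\ref{l2delta} can have at most three partners, and a short case analysis on where the endpoints' spine neighbours sit rules out four parts), so $C_2(P_n\circ K_1)\le 3$ there and the corollary as stated needs the hypothesis $n\ge 5$ for paths. The cycle case is unproblematic for all $n\ge 3$, as your verification for $C_3\circ K_1$ with $S_1=\{u_1,\dots,u_n\}$ shows.
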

	\begin{proof}
		Let $V'$ contains vertices of degree one and $V''=V\setminus V'$.  
		By Theorem \ref{lthm}, we have $C_2(C_n \circ K_1)\le4$ and $C_2(P_n \circ K_1)\le4$.
		Now, we present a $2$-coalition partition with $4$ elements for $P_n$ and $C_n$, as follows:
		\[ 
		\big\{	 V\setminus\{v_{n-2},v_{n-1},v_n\},\{v_{n-2}\},\{v_{n-1}\},\{v_n\}\big\} 
		\]
		such that $v_{n-2},v_{n-1},v_n \in V''$. \qed
	\end{proof}

	\begin{theorem}\label{cofc} 
		The $k$-coalition number of $C_n\circ \overline{K_l}$ is:
		\[
		C_k(C_n \circ\overline{K_l})=
		\begin{cases}
		2 &l \leq k-3\\
		3 &l = k-2\\
		4 & l=k-1\\
		2 & l\geq k
		\end{cases}
		\]
	\end{theorem}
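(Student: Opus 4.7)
The plan is to label the cycle as $v_1,\ldots,v_n$, write $u_{i,1},\ldots,u_{i,l}$ for the pendants attached to $v_i$, and exploit the rigidity imposed by these degree-$1$ vertices. The starting observation, valid for $k\ge 2$, is that every pendant must lie in every $k$-dominating set, since its only neighbor is $v_i$ and that gives at most one neighbor in the set. Hence if $v_i\notin S$ and every pendant is in $S$, then $v_i$ already has $l$ pendant-neighbors in $S$ and still needs $k-l$ of its two cycle neighbors $v_{i-1},v_{i+1}$ to lie in $S$. The four regimes of the theorem correspond exactly to $k-l\ge 3$, $k-l=2$, $k-l=1$, and $k-l\le 0$.

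For an arbitrary $k$-coalition partition $\Theta=\{U_1,\ldots,U_t\}$ with $t\ge 3$, the next step is to show that a single set (which I relabel as $U_1$) holds every pendant. Indeed, coalition partners must jointly cover the pendants, so if two distinct sets each held some, the only possible coalition pair would be those two, leaving all other sets without a partner. Writing $A=U_1\cap V(C_n)$ and $B_j=U_j$ for $j\ge 2$, every $U_j$ with $j\ge 2$ must coalition with $U_1$, and the condition that $U_1\cup U_j$ is a $k$-dominating set reduces to the single combinatorial statement that every vertex of $C_j:=\bigcup_{m\neq 1,j}B_m$ has at least $k-l$ of its two cycle neighbors in $A\cup B_j$.

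Three of the four cases then fall out quickly. For $l\le k-3$ no $k$-dominating set exists except $V(G)$ itself, so the best one can do is split $V(G)$ into two nonempty parts, giving $C_k(G)=2$. For $l\ge k$ the all-pendant set already $k$-dominates, so in the framework above $U_1$ is itself $k$-dominating; since a $k$-dominating set appearing in a coalition partition must have exactly $k$ members, while $|U_1|\ge nl>k$, we get $t\le 2$ and $C_k(G)=2$ (the trivial two-set construction supplying the matching lower bound). For $l=k-2$ the deficit $k-l=2$ forces each $C_j$ to be independent in $C_n$; if $t=4$ one checks that every vertex of $B_2\cup B_3\cup B_4$ must then have both cycle neighbors in $A$, again forcing $U_1$ to be $k$-dominating of impossibly large size, so $C_k(G)\le 3$, and the partition $\{(\text{pendants})\cup\{v_3,\ldots,v_n\},\{v_1\},\{v_2\}\}$ realizes this bound.

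The main obstacle is the case $l=k-1$, where the coalition condition weakens to the mere prohibition of three consecutive cycle vertices in any $C_j$, which is not enough to force $U_1$ itself to be $k$-dominating. Here I would argue by pigeonhole: assume $t=5$; the hypothesis that $U_1$ is not $k$-dominating supplies three consecutive cycle vertices outside $A$, and applying the condition for each of $j=2,3,4,5$ requires each of the four sets $B_2,B_3,B_4,B_5$ to contain at least one of these three vertices, which is impossible. Thus $C_k(G)\le 4$, and the partition $\{V\setminus\{v_{n-2},v_{n-1},v_n\},\{v_{n-2}\},\{v_{n-1}\},\{v_n\}\}$ witnesses equality, completing the analysis.
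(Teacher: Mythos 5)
Your proof is correct and rests on the same skeleton as the paper's: the pendant vertices must lie in every $k$-dominating set, so in any partition with at least three parts a single set $U_1$ absorbs all pendants and is the only possible coalition partner of every other set, and everything then reduces to the deficit $k-l$, i.e.\ how many of the two cycle-neighbours of an uncovered cycle vertex are still required. The cases $l\le k-3$ and $l\ge k$ are handled exactly as in the paper. You diverge in the two middle cases. For $l=k-2$ the paper argues more economically: $U_1$ not being $k$-dominating yields two consecutive cycle vertices outside it, every other set must contain one of them, and at most two disjoint sets can do so, giving $|\Theta|\le 3$ in one stroke; your route (intersecting the coalition conditions over all $j$ to force both cycle-neighbours of every uncovered vertex into $A$, hence $U_1$ $k$-dominating of size exceeding $k$) reaches the same contradiction by a longer path. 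For $l=k-1$ the paper merely points to its general bound $C_k(G)\le\Delta(G)-k+3=4$ (applicable since $\delta(G)=1<k$), whereas your pigeonhole on three consecutive uncovered vertices versus four pairwise disjoint sets is self-contained and is in fact the natural continuation of the paper's own $l=k-2$ argument with ``two consecutive'' replaced by ``three consecutive''; this unifies the two middle cases and is more informative than the paper's one-line reference. Two small points to tidy: state the middle-case arguments for $t\ge 4$ and $t\ge 5$ rather than $t=4$ and $t=5$ (a partition of size $5$ does not a priori yield one of size $4$, although your intersection argument extends verbatim), and record explicitly that no part can be a $k$-dominating set with exactly $k$ members --- such a set would have to consist of all $nl\ge k$ pendants and nothing more, and the pendant set alone never $k$-dominates in the relevant cases --- since this is what licenses the claim that every part needs a partner.
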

	
	\begin{proof}
		Let  $\Theta=\{S_1,S_2,\dots,S_t\}$ be a $k$-coalition partition and $S_i$ and $S_j$ form a $k$-coalition. Suppose that $V'\setminus\{v| \deg(v)=1\}$ and $V''=\{v|\deg(v)=l+2\}$. We consider two cases: 
		
		Case 1: $l \leq k-3$. Since the degree of any vertices is less than $k$, so $S_i \cup S_j $ must contain whole of the vertices. Therefore $\Theta=\{S_i,S_j\}$.
		
		Case 2: $l = k-2$. Since $\deg(v)=1<k$ for $v \in V'$, so $V'\subset S_i\cup S_j$. If $V'$ intersect with both $S_i$ and $S_j$, it is clear that $|\Theta|=2$. Now let  $V'\subset S_i$. Since $S_i$ is not a $k$-dominating set, so there are at least $2$ consecutive  vertices $v_1$ and $v_2$ in $V\setminus S_i$. Every $S_t\in \Theta \setminus \{S_i\}$ must form a $k$-coalition with $S_i$
		and contains at least one of $v_1,v_2$. Therefore $|\Theta|<=3$. We can have a $k$-coalition with 3 elements as follows:
		\[ 
		\big\{	 V\setminus\{v_{n-1},v_n\},\{v_{n-1}\},\{v_n\}\big\} \quad 
		\text{such that }v_{n-1},v_n \in V''
		\]

		Case 3: $ l=k-1$. Similar to the proof of Corollary \ref{th2}.
		
		Case 4: $ l\geq k$. In this case the vertices can be partition into two disjoint sets $V'$ and $V''$ such that $V'\{v| \deg(v)=1\}$ and $V''=\{v|\deg(v)=l+2\}$. Let $S_i$ and $S_j$ be a $k$-coalition, so $V'\subset S_i \cup S_j$. If $V'\subset S_i$ then $S_i$ is a $k$-dominating set, since $V\setminus S_i \subset V''$ and $\deg_{V'}(v)=l+2>k$ for $v \in V''$, and it  contract the definition of $k$-coalition.
		Therefore sets $V'\cap S_i $ and $V' \cap S_j$ are not empty and it  implies  $\Theta=\{S_i,S_j\}$. \qed

	\end{proof}

	With similar proof of Theorem \ref{cofc} we have the following result: 
	\begin{theorem}
		The $k$-coalition number of $P_n\circ \overline{K_l}$ is:
		\[
		C_k(P_n \circ\overline{K_l})=
		\begin{cases}
		2 &l \leq k-3\\
		2 &l = k-2\quad n\leq 3\\
		3 &l = k-2\quad n\geq4\\
		4 & l=k-1\\
		2 & l\geq k
		\end{cases}
		\]
	\end{theorem}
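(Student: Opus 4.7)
The plan is to mirror the case analysis from the proof of Theorem~\ref{cofc} for $C_n\circ\overline{K_l}$, tracking the only structural difference between the two families: the endpoints $v_1,v_n$ of the underlying path have degree $l+1$ rather than the uniform $l+2$ shared by every vertex of the underlying cycle. Write $V'$ for the set of pendants (degree one) and $V''=\{v_1,\dots,v_n\}$ for the path vertices. Given a $k$-coalition partition $\Theta=\{S_1,\dots,S_t\}$, fix any $k$-coalition pair $\{S_i,S_j\}\subseteq\Theta$.

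When $l\le k-3$, every vertex has degree at most $l+2\le k-1<k$ and so must belong to every $k$-dominating set; in particular $V\subseteq S_i\cup S_j$, forcing $t=2$. When $l\ge k$, only pendants have degree less than $k$, so $V'\subseteq S_i\cup S_j$; if $V'$ were contained in a single member of the pair, say $S_i$, then every path vertex would have at least $l\ge k$ pendant neighbors in $S_i$ and $S_i$ itself would be $k$-dominating, contradicting the coalition property, so $V'$ meets both $S_i$ and $S_j$ and again $t=2$. When $l=k-1$, Theorem~\ref{lthm} applied with $\delta(G)=1<k$ and $\Delta(G)=l+2$ gives the upper bound $C_k\le\Delta(G)-k+3=4$; the matching lower bound is realised by a $4$-partition consisting of three singleton path vertices together with their complement, chosen so that each singleton fuses with the big set into a $k$-dominating set, exactly in the spirit of Corollary~\ref{th2}.

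The genuinely new case is $l=k-2$, where internal path vertices have degree $l+2=k$ and may be $k$-dominated from outside a set, while the endpoints $v_1,v_n$ have degree $l+1=k-1<k$ and therefore must belong to every $k$-dominating set alongside all pendants. For $n\ge4$ the path contains a pair of consecutive \emph{internal} vertices, e.g.\ $v_2,v_3$, and the partition $\{V\setminus\{v_2,v_3\},\{v_2\},\{v_3\}\}$ is a valid $3$-$k$-coalition partition (each of $\{v_2\}$ and $\{v_3\}$ fuses with the big set to $k$-dominate the other), while the upper bound $t\le3$ is a verbatim transcription of the corresponding argument in the cycle proof. For $n\le3$ the constraint that $V'\cup\{v_1,v_n\}$ lies in every $k$-dominating set leaves at most one free path vertex (namely $v_2$, and only when $n=3$); a short case analysis on which set of a putative $3$-partition contains this forced set rules out the existence of a third coalition-ready set, so $t\le2$, and the partition $\{V'',V'\}$ achieves equality.

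The main obstacle is this $l=k-2$, $n\le3$ subcase: since the cycle analogue yields $3$, the proof must use the weaker endpoint degree in an essential way to rule out every candidate $3$-partition. Beyond that, the other cases are direct adaptations of the cycle argument of Theorem~\ref{cofc}.
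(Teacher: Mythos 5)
The paper gives no actual proof of this theorem---only the sentence that it follows ``with similar proof of Theorem~\ref{cofc}''---so your proposal is essentially an expansion of exactly the argument the authors intend: the same case split on $l$ versus $k$, the same use of the vertices of degree less than $k$ being forced into every $k$-dominating set, and the same deferral to Corollary~\ref{th2} for $l=k-1$. Your handling of the genuinely new subcase $l=k-2$, $n\le 3$ (the endpoints have degree $k-1$ and join the pendants in every $k$-dominating set, leaving at most one free vertex, whence $t\le 2$) is correct and supplies more detail than the paper does.

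One step, inherited from the paper, does fail: the lower bound in the case $l=k-1$. You claim a $4$-partition ``in the spirit of Corollary~\ref{th2}'' exists, but that corollary's construction is already broken for paths: with $A=V\setminus\{v_{n-2},v_{n-1},v_n\}$, the set $A\cup\{v_{n-2}\}$ does not $k$-dominate the endpoint $v_n$, whose unique path-neighbour $v_{n-1}$ lies outside it. A working construction needs three consecutive \emph{internal} path vertices, hence $n\ge 5$; for $n\le 4$ no $4$-set $k$-coalition partition exists at all. To see this, note every $k$-dominating set contains all pendants, so in a $4$-partition the pendants must all lie in one set $S_1$ and the other three sets partition a subset $W$ of the path vertices, each forming a coalition only with $S_1$; a vertex of $W$ whose path-neighbours all lie in $W$ must then have one path-neighbour in each of the two parts other than its own, which an endpoint (having a single path-neighbour) cannot do, and which cannot be arranged on $P_n$ with $n\le 4$. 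Concretely, for $n=2$, $k=2$, $l=1$ the graph is $P_4$ and the paper's own path theorem gives $C_2(P_4)=3$, contradicting the $l=k-1$ row; likewise $C_2(P_3\circ K_1)=C_2(P_4\circ K_1)=3$. So this row of the statement is itself false for small $n$, and your sketch (like the paper) does not close that gap; the remaining cases of your analysis are sound.
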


	\section{Graphs with large $2$-coalition number} 
	Characterization of graphs of order $n$ whose coalition number is $n$ or $n-1$ is an interesting subject, see \cite{DMGT,Davood}. 
	In this section, we study graphs with large $2$-coalition number.

	\begin{theorem}\label{th9}
		If $C_2(G)=n$, then for a vertex $v$, $\deg(v)\geq n-2$.
	\end{theorem}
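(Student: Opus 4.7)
The plan is to show that the hypothesis $C_2(G) = n$ is extremely restrictive: it forces every vertex (not merely one) to have degree at least $n-2$.

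First, I would observe that any $2$-coalition partition $\Theta$ realizing $C_2(G) = n$ consists of $n$ non-empty pairwise disjoint subsets of the $n$-vertex set $V(G)$, so by a counting/pigeonhole argument each set in $\Theta$ must be a singleton. Write $\Theta = \{\{v_1\}, \ldots, \{v_n\}\}$.

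Next, I would apply the definition of a $k$-coalition partition with $k=2$ to each singleton $\{v_i\}$. By definition, $\{v_i\}$ is either a $2$-dominating set with exactly two members, which is impossible because $|\{v_i\}| = 1$, or it forms a $2$-coalition with some other part $\{v_j\} \in \Theta$. Hence, for every $i$ there exists $j \neq i$ such that $\{v_i, v_j\}$ is a $2$-dominating set of $G$.

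Finally, I would unpack $2$-domination: for the two-element set $\{v_i, v_j\}$ to $2$-dominate $G$, every vertex $w \in V(G) \setminus \{v_i, v_j\}$ must have at least two neighbors inside $\{v_i, v_j\}$, and therefore be adjacent to both $v_i$ and $v_j$. This forces each of $v_i$ and $v_j$ to be adjacent to all $n-2$ remaining vertices, giving $\deg(v_i) \geq n-2$ and $\deg(v_j) \geq n-2$. In particular, the claim holds for every vertex of $G$, so $\delta(G) \geq n-2$, which is stronger than the stated conclusion.

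There is no real obstacle here; the argument is a straightforward definitional chase. The only subtlety worth highlighting is the elimination of the "$2$-dominating set of size $2$" branch of the definition, which is handled automatically by the singleton observation, leaving the coalition-with-a-partner branch as the only option and thereby forcing the pairwise domination condition described above.
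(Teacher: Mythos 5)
Your proposal is correct and follows essentially the same route as the paper's own proof: reduce the partition to singletons, note each singleton must have a $2$-coalition partner, and observe that a $2$-element $2$-dominating set forces both its vertices to be adjacent to all $n-2$ remaining vertices. You spell out the singleton and definitional steps more explicitly than the paper does, and you correctly note the argument in fact yields the stronger conclusion $\delta(G)\geq n-2$.
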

	
	\begin{proof}
		Since $C_2(G)=n$, so for every $v_1  \in V(G)$, there is $v_2 \in V(G)$ such that form a $2$-coalition. Therefore $v_1$ and $v_2$ must be adjacent to all vertices of $V(G)\setminus\{v_1,v_2\}$. \qed
	\end{proof}
	
	\begin{lemma}
		If  $C_2(G)=n$  and $\deg(v)=n-2$, then there is only one $2$-coalition partner for $v$. %of $G$ is the vertex $v$ with  a non-adjacent  vertex $v'$.
	\end{lemma}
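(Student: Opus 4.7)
The plan is to exploit the very restrictive structure forced by $C_2(G)=n$: the only way to achieve a $2$-coalition partition of size $n$ is to take all $n$ singletons $\{v_1\},\ldots,\{v_n\}$ as the partition. Since each singleton has only one element, it fails the ``$k$-dominating set with exactly $k=2$ members'' clause of the definition, so every singleton must form a genuine $2$-coalition with some other singleton. Thus a $2$-coalition partner of $v$ is simply a vertex $u\neq v$ such that $\{v,u\}$ is a $2$-dominating set of $G$.

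Next I would fix the given vertex $v$ with $\deg(v)=n-2$ and let $w$ be the unique vertex of $V\setminus\{v\}$ not adjacent to $v$. For any $u$ that is a partner of $v$, the pair $\{v,u\}$ is $2$-dominating, so every vertex of $V\setminus\{v,u\}$ must have at least two neighbors in $\{v,u\}$, which is to say it must be adjacent to \emph{both} $v$ and $u$.

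The decisive step is to place $w$ inside this picture. If a partner $u$ of $v$ were different from $w$, then $w$ would lie in $V\setminus\{v,u\}$ and would therefore have to be adjacent to $v$; this contradicts the choice of $w$ as the unique non-neighbor of $v$. Hence $u=w$ is forced, so $v$ admits at most one partner. Since $C_2(G)=n$ already guarantees that $v$ has at least one partner (its singleton block cannot stand alone), this partner is unique, and it is precisely the non-neighbor $w$.

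I do not anticipate a genuine obstacle; the argument is essentially a one-line consequence of the degree hypothesis once the reduction to singletons is in place. The only point requiring a little care is the opening reduction itself, namely verifying that a $2$-coalition partition of maximum size $n$ must consist entirely of singletons and that singletons can never satisfy the ``exactly $2$ members'' alternative of the definition, so that the notion of ``partner'' acquires the simple form used in the rest of the proof.
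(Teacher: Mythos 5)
Your argument is correct and is essentially the paper's own proof, just spelled out in more detail: both reduce to the observation that a size-$n$ partition consists of singletons, that a partner $u$ of $v$ forces every vertex of $V\setminus\{v,u\}$ to be adjacent to $v$, and that $\deg(v)=n-2$ then pins the partner down to the unique non-neighbor of $v$. No issues.
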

	
	\begin{proof}
		Since $C_2(G)=n$, so there is a vertex $v'$ that form a $2$-coalition with $v$. Therefore all vertices of $V(G)\setminus\{v,v'\}$ must be adjacent to $v$. Since $\deg(v)=n-2$, so $v'$ is not adjacent to $v$. \qed
	\end{proof}

	\begin{lemma}\label{lm3}
		For any even number $n$, there is an $(n-2)$-regular graph $H$ with $C_2(H)=n$.
	\end{lemma}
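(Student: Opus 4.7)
The plan is to take $H$ to be the complement of a perfect matching on $n$ vertices, namely the cocktail party graph $K_{n/2 \times 2}$, which is well-defined precisely because $n$ is even. Since the matching is a $1$-regular graph, its complement is $(n-2)$-regular, so $H$ has the required degree sequence. The structural feature I will exploit is that every vertex $v \in V(H)$ has a unique non-neighbor, which I will denote $\bar v$; equivalently, $V(H)$ is partitioned into $n/2$ antipodal pairs $\{v,\bar v\}$.

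Next I would exhibit the partition $\Theta = \{\{v\} : v \in V(H)\}$ into singletons and verify it is a $2$-coalition partition. For each vertex $v$, I claim that $\{v\}$ and $\{\bar v\}$ form a $2$-coalition. Neither singleton is a $2$-dominating set (assuming $n \geq 4$, a singleton cannot supply two neighbors to any external vertex). For the union $\{v,\bar v\}$, pick any $u \in V(H)\setminus\{v,\bar v\}$; the only non-neighbor of $u$ in $H$ is $\bar u$, which is different from both $v$ and $\bar v$, so $u$ is adjacent to both $v$ and $\bar v$. Hence $\{v,\bar v\}$ is a $2$-dominating set, confirming the coalition.

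Since every singleton has a coalition partner inside $\Theta$, the partition $\Theta$ witnesses $C_2(H) \geq n$. The reverse inequality is immediate because a partition of an $n$-vertex set has at most $n$ blocks. Therefore $C_2(H) = n$, completing the proof.

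There is essentially no technical obstacle here: the construction is canonical and the verifications are direct. The only subtlety worth checking is that we really need $n \geq 4$ so that a singleton fails to be $2$-dominating and so that the $(n-2)$-regularity is meaningful; the statement is vacuous or trivial for $n = 2$, and the case $n \geq 4$ is handled uniformly by the cocktail party construction above.
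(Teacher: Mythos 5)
Your construction is exactly the paper's: the graph obtained by removing a perfect matching from $K_n$ (the cocktail party graph), with the all-singletons partition witnessing $C_2(H)\ge n$ via each vertex pairing with its unique non-neighbor. Your write-up just verifies the coalition condition more explicitly than the paper does, so there is nothing to flag.
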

	
	\begin{proof}
		Suppose that  $V(H)=\{v_1,v_2,\dots,v_n\}$. For each $ i$, let   $v_{2i-1}$ and $v_{2i}$ are not  adjacent and these two vertices  are adjacent to all vertices in  $V(H)\setminus\{v_{2i-1},v_{2i}\}$. Obviously this graph $H$ is $(n-2)$-regular and has $2$-coalition partition $ \big\{\{v_1\},\{v_2\},...,\{v_n\}\big\}$. Therefore we have the result. \qed
		
	\end{proof}
	
	\begin{corollary}
		If $G$ is an $(n-2)$-regular graph with $C_2(G)=n$, then $n$ is even and $G$ is isomorphic to graph $H$ in the proof of  Lemma \ref{lm3}. 
	\end{corollary}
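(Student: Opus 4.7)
The plan is to pass to the complement and recognize $\overline{G}$ as a perfect matching, from which both conclusions follow immediately; the hypothesis $C_2(G)=n$ plays essentially no active role in the argument beyond providing the context.

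First, I would observe that if $G$ is $(n-2)$-regular, then every vertex of the complement $\overline{G}$ has degree $(n-1)-(n-2)=1$. Hence $\overline{G}$ is a $1$-regular graph on $n$ vertices, that is, a disjoint union of edges. Such a graph exists only when $n$ is even (either by the handshake lemma, or directly, since each vertex must be matched to exactly one other vertex), which yields the first claim.

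Second, I would use the fact that up to isomorphism there is only one $1$-regular graph on an even number of vertices: the perfect matching $M=\{v_{2i-1}v_{2i}:1\le i\le n/2\}$. Transferring back to $G$ by taking complements, $G$ is characterized (up to isomorphism) as the graph on vertex set $\{v_1,\ldots,v_n\}$ in which $v_{2i-1}\not\sim v_{2i}$ for each $i\in\{1,\ldots,n/2\}$ while every other pair of vertices is adjacent. This is exactly the description of the graph $H$ built in the proof of Lemma~\ref{lm3}, so $G\cong H$.

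I do not anticipate any serious obstacle: the argument is essentially the single observation that complementing a $(n-2)$-regular graph produces a $1$-regular graph, plus the uniqueness of perfect matchings up to isomorphism. The only minor point worth a sanity check is the boundary case $n=2$, where $G$ consists of two non-adjacent vertices and $H$ is constructed in the same way, so the statement holds trivially.
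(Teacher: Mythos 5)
Your proposal is correct, and in fact the paper offers no written proof of this corollary at all, so there is nothing to deviate from: the complement argument you give is a complete and clean justification. Your observation that the hypothesis $C_2(G)=n$ is never used is accurate and worth dwelling on for a moment: an $(n-2)$-regular graph exists only for even $n$ (for odd $n$ the degree sum $n(n-2)$ would be odd), and when it exists it is forced to be the complement of the unique $1$-regular graph on $n$ vertices, i.e.\ the graph $H$ of Lemma~\ref{lm3}. So the corollary is really a statement about $(n-2)$-regular graphs, with Lemma~\ref{lm3} supplying the converse direction (that this $H$ does achieve $C_2(H)=n$). The route the authors presumably had in mind goes through the preceding lemma instead --- if $C_2(G)=n$ and $\deg(v)=n-2$, then $v$ has a unique $2$-coalition partner, namely its unique non-neighbor, and pairing each vertex with its partner recovers the perfect matching in the complement --- which does use the coalition hypothesis but proves nothing more than your two-line complement argument. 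Your handling of the boundary case $n=2$ is also fine.
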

	
	\begin{corollary}
		If $C_2(G)=n$ and $n$ is odd, then  number of  full vertices of $G$ is odd.
	\end{corollary}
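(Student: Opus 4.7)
The plan is to show that the non-full vertices of $G$ come in disjoint pairs, so their number is even; since the total is $n$ (odd), the count of full vertices must be odd.

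First I would invoke Theorem~\ref{th9}: since $C_2(G)=n$, every vertex has degree at least $n-2$, so each vertex is either \emph{full} (degree $n-1$) or has degree exactly $n-2$. Next I would analyze when two vertices $u,v$ can form a $2$-coalition in the partition into singletons (which is what $C_2(G)=n$ forces). Since a singleton is never a $2$-dominating set, each $\{v\}$ must be a $2$-coalition partner of some $\{u\}$. For $\{u,v\}$ to be a $2$-dominating set, every $w\notin\{u,v\}$ must be adjacent to both $u$ and $v$; in particular, the only possible non-neighbor of $u$ (other than itself) is $v$, and vice versa.

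The key consequence is: if $v$ has degree $n-2$, then its unique non-neighbor $v^{*}$ must be its unique $2$-coalition partner (this is essentially the preceding lemma). But then $v^{*}$ has $v$ as a non-neighbor, so $\deg(v^{*})\le n-2$, and by Theorem~\ref{th9} we get $\deg(v^{*})=n-2$. Moreover $(v^{*})^{*}=v$, so the map $v\mapsto v^{*}$ is a fixed-point-free involution on the set of non-full vertices. Hence the number of non-full vertices is even.

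Since $n$ is odd, the number of full vertices equals $n$ minus an even number, which is odd, as claimed. I do not expect any serious obstacle here; the only subtlety is to justify carefully that a degree-$(n-2)$ vertex has its unique non-neighbor as its only possible $2$-coalition partner, which follows directly from the adjacency condition characterizing when $\{u,v\}$ is a $2$-dominating set.
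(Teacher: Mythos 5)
Your argument is correct. The paper states this corollary without any proof, so there is nothing to compare against; your pairing of the non-full vertices by the fixed-point-free involution $v\mapsto v^{*}$ (each degree-$(n-2)$ vertex matched with its unique non-neighbor, which must itself have degree $n-2$ by Theorem~\ref{th9}) is exactly the argument the preceding results are set up to deliver, and it closes the case cleanly. One small point worth making explicit: Theorem~\ref{th9} is worded as ``for a vertex $v$,'' but its proof actually establishes $\deg(v)\ge n-2$ for \emph{every} vertex, which is the universally quantified form your involution argument requires.
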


	\section{Trees with large $2$-coalition number}
	Characterization trees of order $n$ whose coalition number is $n$ or $n-1$ is an interesting subject. In this section, we study trees with large $2$-coalition number. 
	Here, we obtain another upper bound for the $2$-coalition number of trees. 
	
	\begin{theorem}\label{bound}
		For any tree $T$ of order $n$, $C_2(T)\leq \frac{n}{2}+1$.
	\end{theorem}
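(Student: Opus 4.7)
The plan is to analyze the distribution of leaves of $T$ across the sets of a $2$-coalition partition $\Theta=\{U_1,\ldots,U_r\}$ that realizes $r=C_2(T)$, then combine this with a degree/edge count. The bound is immediate for $n\le 3$ since then $C_2(T)\le 2\le n/2+1$, so I focus on $n\ge 4$. A preliminary observation is that no $2$-dominating set of a tree on $n\ge 4$ vertices has size exactly $2$: two such vertices would force every other vertex to be adjacent to both, producing a $C_4$. Hence for $n\ge 4$ every set of $\Theta$ must have a $2$-coalition partner.

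Because leaves have degree $1$, a leaf $v$ cannot have two neighbors in any set, so each coalition pair $\{U_i,U_j\}$ must satisfy $v\in U_i\cup U_j$. Thus if $v\in U_{a(v)}$ then every coalition pair of $\Theta$ contains $U_{a(v)}$, which forces the leaves of $T$ to lie in at most two sets of $\Theta$. If they lie in exactly two, the only possible coalition pair is between those two sets, so $r=2\le n/2+1$. From here on I assume all leaves lie in a single set $U_1$, and every $U_i$ with $i\ge 2$ forms a coalition with $U_1$.

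Let $A=\{v\in V\setminus U_1:|N(v)\cap U_1|<2\}$ and, for $v\in A$, set $d_0(v)=2-|N(v)\cap U_1|\in\{1,2\}$. Since $U_1$ is not $2$-dominating, $A\neq\emptyset$. For each $v\in A$ with $v\in U_{j(v)}$, the requirement that $U_1\cup U_i$ $2$-dominates $v$ for every $i\in\{2,\ldots,r\}\setminus\{j(v)\}$ gives $|N(v)\cap U_i|\ge d_0(v)$; since the $U_i$ are disjoint this yields $\deg(v)\ge(2-d_0(v))+(r-2)d_0(v)\ge r-1$ (assuming $r\ge 3$, which I may, since $r\le 2$ trivially satisfies the bound). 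When $|A|\ge 2$, summing this over $A$ together with $\sum_{v\notin A}\deg(v)\ge n-|A|$ and $\sum_v\deg(v)=2(n-1)$ gives $(r-2)|A|\le n-2$, so $r\le(n-2)/|A|+2\le n/2+1$.

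The remaining case $|A|=1$, say $A=\{v_0\}$, is the main obstacle. Counting edges between $U_1$ and $V\setminus U_1$ gives $2(|V\setminus U_1|-1)+(2-d_0(v_0))\le n-1$, i.e.\ $|V\setminus U_1|\le(n-1+d_0(v_0))/2$. For $d_0(v_0)=1$ this already yields $r-1\le|V\setminus U_1|\le n/2$, and the bound follows. For $d_0(v_0)=2$ the edge count alone is insufficient; here I would reinforce it with the fact that every neighbor of $v_0$ lies in $V\setminus U_1\setminus\{v_0\}$, while the coalition conditions force $\deg(v_0)\ge 2(r-2)$, so $2(r-2)\le|V\setminus U_1|-1\le(n-1)/2$ and hence $r\le(n+7)/4\le n/2+1$ for $n\ge 3$. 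This sub-case, where the straightforward edge count has to be combined with the high-degree constraint on the unique undominated vertex $v_0$, is the hardest step of the argument.
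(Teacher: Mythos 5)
Your proof is correct, but it takes a genuinely different route from the paper's. Both arguments share the same opening move: since a leaf has degree $1$, every coalition pair must contain the set holding that leaf, so all leaves lie in one set $U_1$ of an optimal partition (or in exactly two sets, which immediately forces $C_2(T)=2$), and every other set must partner with $U_1$. From there the paper is much shorter: it combines its earlier bound $C_k(G)\le \Delta(G)-k+3$ (giving $C_2(T)\le \Delta(T)+1\le k+1$, where $k$ is the number of leaves, since $\Delta(T)\le k$ in a tree) with the trivial count $C_2(T)\le 1+(n-k)$ coming from the non-leaf vertices that must populate the remaining sets, and observes that $\min\{k+1,\,n-k+1\}\le \frac{n}{2}+1$. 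You instead run a handshake-lemma computation: each vertex of the deficiency set $A$ (those not $2$-dominated by $U_1$) is forced to have degree at least $r-1$, and comparing the degree sum with $2(n-1)$ settles the case $|A|\ge 2$, with a separate cut-edge count plus the high-degree constraint on $v_0$ handling $|A|=1$. Your version is self-contained -- it does not lean on the $\Delta+1$ bound or on the inequality $\Delta(T)\le k$ -- and it makes explicit two facts the paper glosses over (that a tree on $n\ge 4$ vertices has no $2$-dominating set of size $2$, so every set really needs a partner, and what happens when the leaves split between two sets); the price is the case analysis on $|A|$ and $d_0(v_0)$. The paper's proof is slicker but its presentation is loose: the assertion ``$L\subset\pi$'' and the line ``$k+1=1+n-k$'' are really a worst-case balancing of two bounds rather than derived identities, which your argument avoids entirely.
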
 
	\begin{proof}
		Let $\pi$ be a $c_2$-partition of $T$ and $L$ be the set of all leaves of $T$. We know that $L\subset \pi$.  Any set $X\neq L$ in $\pi$ forms a $2$-coalition with $L$. By Theorem \ref{lthm}, there are at most 
		$\Delta(T)-2+3=\Delta(T)+1$ sets in $\pi$. But it is easy to see that for any graph with $k$ leaves, $\Delta(T)\leq k$, and so $$C_2(T)\leq k+1.$$ On the other hand, since there are $k$ leaves in $L$, so we have at most $n-k$ vertices which are not in $L$. If any set $X\neq L$ in $\pi$ is a singleton (worst case), then $$C_2(T)\leq 1+n-k.$$
		From these two upper bounds we have $k+1=1+n-k$ and so $n=2k$. Therefore we have the result. \qed
	\end{proof}

	\begin{corollary} Let $T$ be a tree of order $n$.
		\begin{enumerate} 
			\item[(i)] 
			If $C_2(T)=n$, then $T=P_2$.
			\item[(ii)]
			If $C_2(T)=n-1$, then $T=P_4$. 
		\end{enumerate}
	\end{corollary}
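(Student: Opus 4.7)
The plan is to reduce each part, via Theorem~\ref{bound}, to a small finite case check. Since $C_2(T)\le n/2+1$ for every tree $T$ of order $n$, the hypothesis $C_2(T)=n$ forces $n\le 2$ and the hypothesis $C_2(T)=n-1$ forces $n\le 4$. From that point on, both parts are resolved by inspecting the handful of remaining trees.

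For part (i), the only tree of order $2$ is $P_2$, so I would just exhibit the partition $\{\{v_1\},\{v_2\}\}$ of $V(P_2)$: neither singleton is a $2$-dominating set (the other vertex would have only one neighbor in it), but their union is the whole vertex set and hence trivially $2$-dominating, so $\{v_1\}$ and $\{v_2\}$ form a $2$-coalition. This shows $C_2(P_2)=2$ and identifies $T=P_2$.

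For part (ii), after Theorem~\ref{bound} the only candidates of order $4$ are $P_4$ and the star $K_{1,3}$. The path formula already proved in the paper yields $C_2(P_4)=3=n-1$, so $P_4$ attains the bound. The main obstacle is to exclude $K_{1,3}$. My strategy is to exploit that every leaf $l_i$ of $K_{1,3}$ has a unique neighbor (the center $c$), so no leaf outside a set $S$ can have two neighbors in $S$. Hence every $2$-dominating set of $K_{1,3}$ must contain all three leaves $\{l_1,l_2,l_3\}$. Consequently, in any $2$-coalition partition, whenever two non-$2$-dominating parts form a coalition, their union already contains $\{l_1,l_2,l_3\}$. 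A short case analysis on the part containing $c$ (it lies either alone as $\{c\}$ or together with one or more leaves) then shows that every candidate three-part partition forces either some part with no valid coalition partner or a duplicated leaf, so $C_2(K_{1,3})\le 2<3$. Combined with the enumeration of smaller trees, this leaves $P_4$ as the only tree attaining $C_2(T)=n-1$.

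The hard part of the argument is precisely the structural analysis of $K_{1,3}$: once the three leaves are pinned to at most two parts, one must still verify by cases on the location of $c$ that a third part cannot be inserted consistently. Everything else — the application of Theorem~\ref{bound}, the enumeration of trees of order $\le 4$, and the verification $C_2(P_2)=2$, $C_2(P_4)=3$ — is either immediate or already contained in the path computations of the previous section.
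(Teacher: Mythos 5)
Your reduction is the same as the paper's: both proofs invoke Theorem~\ref{bound} to force $n\le 2$ in part (i) and $n\le 4$ in part (ii). The difference is that the paper stops there and simply asserts ``therefore $T=P_2$'' and ``therefore $T=P_4$,'' whereas you go on to do the finite case check that this assertion actually requires. Your exclusion of $K_{1,3}$ is correct and is a genuine addition: every $2$-dominating set of $K_{1,3}$ must contain all three leaves, so if two parts form a $2$-coalition their union contains all leaves, any third part is forced to be $\{c\}$, and a partner for $\{c\}$ would have to contain all three leaves and hence already be $2$-dominating, so $C_2(K_{1,3})\le 2$. On this point your write-up is more complete than the paper's.

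The gap is in the step you wave at: ``combined with the enumeration of smaller trees, this leaves $P_4$.'' If you actually carry out that enumeration using the paper's own values, it does not leave only $P_4$. The paper's path theorem gives $C_2(P_3)=2=n-1$, so $P_3$ satisfies the hypothesis of part (ii) and is not $P_4$; likewise the paper's stated value $C_2(P_2)=1=n-1$ would add $P_2$. (Separately, your verification $C_2(P_2)=2$ in part (i) is the correct consequence of the definitions --- the union $\{v_1\}\cup\{v_2\}=V$ is vacuously $2$-dominating --- but it contradicts the paper's own claim that $C_2(P_2)=1$, and one must also dispose of $P_1$, for which the paper asserts $C_2(P_1)=1=n$.) So the enumeration step, far from being routine, is where the statement itself breaks down: no completion of your argument (or of the paper's) can establish part (ii) as written without first reconciling these small cases, e.g.\ by restricting to $n\ge 5$ or by adjusting the conventions for $2$-coalition partitions on very small graphs. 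You should either exhibit why $P_3$ is excluded (it is not, under the given definitions) or flag that the corollary needs an additional hypothesis.
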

	\begin{proof}
		\begin{enumerate} 
			\item[(i)] 
			Suppose that $C_2(T)=n$. By Theorem \ref{bound}, we have $n\leq \frac{n}{2}+1$, so $n\leq 2$. Therefore $T=P_2$. 
			\item[(ii)] 
			If 	$C_2(T)=n-1$, then by Theorem \ref{bound}, we have $n-1\leq \frac{n}{2}+1$, so $n\leq 4$. Therefore $T=P_4$.
		\end{enumerate}  
	\end{proof}

	\begin{theorem}\label{treed}
		If there is a vertex $x$ in the tree $T$ such that the distance between $x$ and all leaves of $T$ is at least $2$, then   $C_2(T)\geq 3$. 
	\end{theorem}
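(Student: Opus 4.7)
The approach is to exhibit an explicit $2$-coalition partition of cardinality $3$, namely $\Theta = \{A, B, C\}$ where $A = \{x\}$, $B = N(x)$, and $C = V(T) \setminus (\{x\} \cup N(x))$. I will check that each of $A$, $B$, $C$ is not a $2$-dominating set and that both $A \cup C$ and $B \cup C$ are $2$-dominating; that provides a coalition partner in $\Theta$ for each of the three sets.

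First I would record the structural consequences of the hypothesis on $x$. Since the distance from $x$ to every leaf is at least $2$, $x$ itself is not a leaf, so $\deg(x) \geq 2$; every $y \in N(x)$ is at distance $1$ from $x$, so by the hypothesis $y$ is not a leaf either, giving $\deg(y) \geq 2$. Moreover $C$ is non-empty, since otherwise $V(T) = \{x\} \cup N(x)$ and every $y \in N(x)$ would have $x$ as its only neighbor, making $y$ a leaf. I will also use the standard tree fact that $N(x)$ is independent in $T$ (two adjacent neighbors of $x$ would close a triangle through $x$), so every neighbor of a vertex $y \in N(x)$ other than $x$ lies in $C$.

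Now the non-domination and coalition checks. $A$ is a singleton and so not $2$-dominating; $C$ contains no neighbor of $x$, so $x$ has zero neighbors in $C$; and for $B = N(x)$, any $v \in C$ has at most one neighbor in $N(x)$ (two would yield a cycle through $x$), so $B$ fails to $2$-dominate $v$. For the pairings, $A \cup C = V(T) \setminus N(x)$: each $y \in N(x)$ has $x \in A \cup C$ together with at least one further neighbor, which lies in $C$ by independence of $N(x)$, giving at least two neighbors in $A \cup C$. And $B \cup C = V(T) \setminus \{x\}$ $2$-dominates the only missing vertex $x$, since all $\deg(x) \geq 2$ neighbors of $x$ lie in $B$. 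Hence $A$ and $C$ form a $2$-coalition and so do $B$ and $C$, so every set in $\Theta$ has a partner. Therefore $\Theta$ is a valid $2$-coalition partition of $T$ with three sets and $C_2(T) \geq 3$. The only thing requiring care is invoking the hypothesis three times—to obtain $\deg(x) \geq 2$, to obtain $\deg(y) \geq 2$ for every $y \in N(x)$, and to obtain $C \neq \emptyset$—but once those facts are in hand the verification is routine.
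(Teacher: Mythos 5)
Your proposal is correct and uses exactly the same partition as the paper, namely $\{x\}$, $N(x)$, and the set of vertices at distance at least $2$ from $x$; the paper merely asserts this is a $c_2$-partition, whereas you supply the verification (independence of $N(x)$ in a tree, $\deg \geq 2$ for $x$ and its neighbors, non-emptiness of the third part), all of which checks out.
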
 
	\begin{proof}
		Suppose that  $S=\{v\in V(T)|d(v,x)\geq 2\}$ and  $S_1=\{w\in V(T)|d(v,w)=1\}$. Then $S$, $S_1$ and $\{x\}$ is a $c_2$-partition. Therefore we have the result. \qed  	
	\end{proof}

	Now, we show that using Theorem \ref{treed} we have another proof for $2$-coalition number of paths.

	\begin{corollary} 
		For any $n\geq 5$, $C_2(P_n)=3$.  
	\end{corollary}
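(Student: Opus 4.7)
The plan is to combine two ingredients already established in the excerpt: Theorem \ref{lthm} for the upper bound and Theorem \ref{treed} for the lower bound. For the upper bound, observe that for $n \geq 5$ the path $P_n$ has $\Delta(P_n) = 2$ and $\delta(P_n) = 1 < 2 = k$, so the hypothesis of Theorem \ref{lthm} is satisfied and we obtain $C_2(P_n) \leq \Delta(P_n) - 2 + 3 = 3$.

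For the lower bound, I will produce a vertex witnessing the hypothesis of Theorem \ref{treed}. Label $V(P_n) = \{v_1, v_2, \ldots, v_n\}$ with the natural edge set, so that the leaves are exactly $v_1$ and $v_n$. For $n \geq 5$, the index range $3 \leq i \leq n-2$ is non-empty, and any such vertex $x = v_i$ satisfies $d(x, v_1) = i - 1 \geq 2$ and $d(x, v_n) = n - i \geq 2$. Hence Theorem \ref{treed} applies and gives $C_2(P_n) \geq 3$. Combined with the upper bound, $C_2(P_n) = 3$.

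The only thing to be careful about is confirming that the two invoked theorems do apply to $P_n$ for the stated range, which amounts to the easy arithmetic checks above; there is no genuine obstacle since the corollary is designed precisely to showcase that Theorem \ref{treed} supplies a cleaner derivation of the lower bound previously obtained by an explicit partition.
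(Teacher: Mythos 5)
Your proof is correct and follows essentially the same route as the paper: Theorem \ref{lthm} for the upper bound $C_2(P_n)\leq 3$ and Theorem \ref{treed} for the lower bound, with the only difference being that you spell out the verification of the hypotheses (the condition $\delta(P_n)=1<2$ and the existence of a vertex $v_i$, $3\leq i\leq n-2$, at distance at least $2$ from both leaves) that the paper leaves implicit.
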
 
	\begin{proof}
		By Theorem \ref{lthm}, $C_2(P_n)\leq \Delta(P_n)-l+3=3$. On the other hand 
		by Theorem \ref{treed}, for $n\geq 5$,  $C_2(P_n)\geq 3$. Therefore we have the result. \qed
	\end{proof}

	\section{Conclusion} 
	
	This paper introduces the concept of the $k$-coalition in
	graphs and investigates some  properties related to $k$-coalition number. We have 
	presented a sufficient condition for existence $k$-coalition partition, and also we
	presented  some bounds on the $k$-coalition number.  Utilizing these bounds, we have determined the precise values of $k$-coalition number of some specific graphs. We studied the graphs $G$ with large $ C_k(G)$. We have outlined some unresolved problems
	and potential research directions related to the $k$-coalition number of graphs.
	Also, there is still much work to be done in this area.
	
	\begin{enumerate} 
		\item[1.]  We proved that for $s\leq t$,	$C_k(K_{s,t})\geq t-k+2$. We think that the following conjecture is true. 
		\begin{conjecture}
			For $s\leq t$,	$C_k(K_{s,t})= t-k+2$.
		\end{conjecture}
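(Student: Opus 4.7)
The goal is to establish the matching upper bound $C_k(K_{s,t}) \leq t-k+2$, which together with the earlier theorem would confirm the conjecture. My plan is to leverage the bipartite structure directly. First I would prove the combinatorial characterization: for $K_{s,t}$ with bipartition $X \cup Y$, $|X|=s \leq |Y|=t$, a set $D$ is $k$-dominating if and only if ($X \subseteq D$ or $|D \cap Y| \geq k$) and ($Y \subseteq D$ or $|D \cap X| \geq k$). This translates the problem into bookkeeping on the two intersection parameters $(|U \cap X|, |U \cap Y|)$ for each $U$ in a given $k$-coalition partition $\Theta$, and makes both the $k$-coalition test and the $k$-domination test transparent.

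Let $\Theta$ achieve the maximum and fix $U \in \Theta$ containing a chosen vertex $y^\ast$ of minimum degree $s$. Applying Lemma \ref{l2delta} directly gives $|\Theta| \leq \Delta(K_{s,t})-k+3 = t-k+3$, which already matches Theorem \ref{lthm}. To close the one-unit gap down to $t-k+2$, I would argue structurally: the bound $1+|N(x)|-(k-1)$ inside the proof of Lemma \ref{l2delta} is saturated only if a fixed undominated vertex $x \notin U$ has exactly $k-1$ of its neighbors in $U$ and each of its remaining $|N(x)|$ neighbors sits in a distinct coalition partner of $U$. Classifying the sets of $\Theta$ by the pair $(|V \cap X|, |V \cap Y|)$ and using the characterization above, one would show that in $K_{s,t}$ this extremal configuration would force the $s$ vertices of $X$ to be split across $t-k+2$ distinct coalition partners of $U$ while simultaneously giving every vertex of $Y \setminus U$ at least $k$ neighbors inside each such coalition. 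A double count on $\sum_{V \in \Theta} |V \cap X|$ paired with the two bipartite coverage inequalities should rule this out and remove the last unit.

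The main obstacle is exactly that final one-unit saving, a familiar sticking point in tight coalition-number arguments where the generic Lemma \ref{l2delta} bound is off by one from the true value. A secondary concern I would want to settle before declaring the conjecture proven is that the paper's lower-bound construction implicitly requires $s \geq k$, since otherwise the ``big'' block $X \cup \{v'_1,\dots,v'_{k-1}\}$ fails to form a $k$-coalition with any of the $Y$-singletons. Any clean proof of the conjecture will therefore likely need to either fold the hypothesis $s \geq k$ into the final statement or handle the small-$s$ boundary regime of $K_{s,t}$ as a separate case.
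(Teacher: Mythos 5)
The statement you are asked about is presented in the paper as a conjecture with no proof offered; the paper only proves the lower bound $C_k(K_{s,t})\ge t-k+2$, so there is no argument of the authors' to compare yours against. Your proposal, as written, is a plan rather than a proof, and the plan has two concrete gaps. First, your opening step --- that Lemma~\ref{l2delta} ``directly gives'' $|\Theta|\le t-k+3$ --- does not follow: that lemma only bounds the number of coalition partners of a \emph{single} set, and converting it into a bound on $|\Theta|$ requires knowing that every set of $\Theta$ must partner with the distinguished set $U$. The paper derives this only under the hypothesis $k>\delta(G)=s$; in the complementary regime $s\ge k$ (which is exactly where the lower-bound construction has any hope of working) you have no a priori bound of the form $t-k+3$ to improve. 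Second, the decisive step --- ruling out the extremal configuration so as to drop from $t-k+3$ to $t-k+2$ --- is left at the level of ``a double count \dots should rule this out''; this is precisely the hard part, and nothing in the proposal carries it out.

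More importantly, the conjecture is false as stated, so no proof strategy can succeed without first amending the statement. Take $K_{1,3}$ with $k=1$: the conjectured value is $t-k+2=4$, but the center must sit alone in its set (any set containing the center and a leaf is a dominating set of size at least $2$, which is neither a singleton dominating set nor eligible for a coalition), and any two leaf-sets forming a coalition must jointly contain all three leaves, so at most two leaf-sets are possible and $C_1(K_{1,3})=3$. Similarly $C_2(K_{1,3})=2$, not $3$: every $2$-dominating set of a star must contain all leaves, so a coalition pair exhausts the leaves and leaves room for at most the set $\{c\}$, which then cannot find a partner. Your secondary concern about the regime $s<k$ is therefore pointing at a real defect, but the correct conclusion is stronger than you draw: both the lower-bound construction and the conjectured equality break down for small $s$, and in the opposite regime $s\ge k$, $s\ge 2$, the ``big'' block $X\cup\{v'_1,\dots,v'_{k-1}\}$ is itself a $k$-dominating set with more than $k$ members, hence not even admissible in a $k$-coalition partition. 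Before investing in the upper bound you should determine the correct hypotheses (at least a relation between $s$ and $k$ under which the first block is non-$k$-dominating yet completable) and restate the claim accordingly.
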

		
		\item[2.] What is the exact values of $k$-coalition number of specific graphs, such as path, cycle, tree and unicyclic graphs for $k\geq 3$.
		
		\item[3.] Study Nordhaus and Gaddum lower and upper bounds on the sum and the
		product of the $k$-calition number of a graph and its complement.
		
		\item[4.] What is the $k$-coalition number of graph operations, such as corona, Cartesian product, join, lexicographic, and so on?
		
		\item[5.] Associated with 	every $k$-coalition partition $\pi$ of a graph $G$, there is a graph called the $k$-coalition graph of $G$
		with respect to $\pi$, denoted $kCG(G,\pi)$, the vertices of which correspond one-to-one with the sets $V_1, V_2,..., V_k$  of $\pi$ and two vertices are adjacent in $kCG(G,\pi)$ if and only if
		their corresponding sets in $\pi$ form a coalition. Study of $k$-coalition graph is an interesting subject.  
		
	\end{enumerate}

	\medskip
		

\begin{thebibliography}{10}
		
		
		
		
		\bibitem{QM} S. Alikhani, D. Bakhshesh, H.R. Golmohammadi, Total coalitions in graphs, Quaest.Math. (2024): 1–12. \texttt{https://doi.org/10.2989/16073606.2024.2365365}
		
		
		\bibitem{DMGT0}  S. Alikhani, D. Bakhshesh, H.R. Golmohammadi, S. Klav\v zar, On independent coalition in graphs and independent coalition graphs,  Discuss. Math. Graph Theory. In press. \texttt{https://doi.org/10.7151/dmgt.2543}
		
		\bibitem{DMGT} S. Alikhani, D. Bakhshesh, H.R. Golmohammadi, E.V. Konstantinova, Connected coalitions in graphs, Discuss. Math. Graph Theory. In press. \texttt{https://doi.org/10.7151/dmgt.2509}
		
		
		
		
		\bibitem{Davood} D. Bakhshesh, M.A. Henning and D. Pradhan, On the coalition number of trees,
		Bull. Malays. Math. Sci. Soc. 46 (2023) 95.
		
		
		\bibitem{Davood2} D. Bakhshesh, M.A. Henning and D. Pradhan, Singleton coalition graph chains. Comput. Appl. Math. 43 (2024), no. 2, 85.
		
		\bibitem{Cockayne} E.J. Cockayne, S.T. Hedetniemi, Towards a theory of domination in graphs, {\it Networks}, 7 (1977) 247-261.			
		
		
		
		\bibitem{Hay1}	T.W. Haynes, J.T. Hedetniemi, S.T. Hedetniemi, A.A. McRae, and R. Mohan, Upper bounds on the coalition number. Australas. J. Combin. 80 (2021), 442–453.
		
		\bibitem{Hay2} T.W. Haynes, J.T. Hedetniemi, S.T. Hedetniemi, A.A. McRae, and R.Mohan, Coalition 	graphs. Commun. Comb. Optim. 8 (2023), no. 2, 423–430.
		
		\bibitem{Hay3} T.W. Haynes, J.T. Hedetniemi, S.T. Hedetniemi, A.A. McRae, and R. Mohan, Self-coalition 	graphs. Opuscula Math. 43 (2023), no. 2, 173–183.
		
		
		
		
		
		
		
		\bibitem{coal0} T. W. Haynes, J. T. Hedetniemi, S. T. Hedetniemi, A. A. McRae, and R. Mohan, Introduction to coalitions in graphs. \textit{AKCE Int. J. Graphs Combin.} \textbf{17} (2020) no. 2, 653--659.
		
		\bibitem{HaHeHe-20} T. W. Haynes, S. T. Hedetniemi, and M. A. Henning (eds), \emph{Topics in Domination in Graphs}, \textit{Dev. Math.}, vol 64. Springer, Cham. (2020).
		
		\bibitem{HaHeHe-21} T. W. Haynes, S. T. Hedetniemi, and M. A. Henning (eds), \emph{Structures of Domination in Graphs}, \textit{Dev. Math.}, vol 66. Springer, Cham. (2021).
		
		\bibitem{8} 
		T.W.~Haynes, J.T.~Hedetniemi, S.T.~Hedetniemi, A.A.~McRae, R.~Mohan,
		Introduction to coalitions in graphs, 
		{\it AKCE Int.\ J.\ Graphs Combin.}\ 17 (2020) 653--659.
		
		\bibitem{11} B. Zelinka,  Domination in the generalized Petersen graphs, {\it Czechoslov. Math. J.}, 52 (127) (2002), 11–16.
		
		\bibitem{12} B. Zelinka, Domatic number and degrees of vertices of a graph,
		{\it Math. Slovaca} 33 (1983): 145–147.
		
		\bibitem{13} B. Zelinka, On domatic numbers of graphs, {\it Math. Slovaca} 31
		(1981),  91–95.
		
		
	\end{thebibliography}
\end{document}